\documentclass[12pt, a4paper]{amsart}

\usepackage[english]{babel}
\usepackage{amsmath}
\usepackage{amssymb}
\usepackage{bm}
\usepackage{amscd} 
\usepackage{microtype} 
\usepackage{verbatim} 
\usepackage{color}  
\usepackage{tikz}\usetikzlibrary{arrows.meta}
\usepackage{enumitem}
\usepackage{mathtools} 
\usepackage{cite}
\usepackage{hyperref} 
\usepackage[initials,msc-links]{amsrefs} 
\usepackage{dynkin-diagrams}

\usepackage[OT2, T1]{fontenc}
\title{Profinite recognition of uniformity}

\author[H. Kammeyer]{Holger Kammeyer}

 \address{Heinrich Heine University D{\"u}sseldorf, Faculty of Mathematics and Natural Sciences, Department of Mathematics, Germany}
 \email{holger.kammeyer@hhu.de}
 
\subjclass[2020]{22E40, 20G15, 20E18, 11E72}
\keywords{lattices, cocompact, uniform, profinite completion}

\theoremstyle{plain}
\newtheorem{theorem}[equation]{Theorem}

\newtheorem{corollary}[equation]{Corollary}
\newtheorem{proposition}[equation]{Proposition}

\theoremstyle{definition}

\newtheorem*{definition*}{Definition}

\newtheorem*{observation*}{Observation}

\providecommand{\ignore}[1]{}

\providecommand{\R}{\mathbb{R}}
\providecommand{\Q}{\mathbb{Q}}
\providecommand{\Z}{\mathbb{Z}}

\providecommand{\C}{\mathbb{C}}

\DeclareMathOperator{\Sym}{Sym}

\newcommand*{\arXiv}[1]{ \href{http://www.arxiv.org/abs/#1}{arXiv:\textbf{#1}}}

\begin{document}

\begin{abstract}
  Let \(G\) be a semisimple Lie group that is not isogenous to the isometry group of real or complex hyperbolic \(n\)-space.  We show that the rational rank is a profinite invariant for irreducible lattices in \(G\), unless \(G\) has type \(D_{2n}\) which forms a genuine exception.  In particular, profinitely isomorphic lattices in \(G\) of type~\(D_{2n}\) can have differing virtual cohomological dimension.  As another corollary, uniformity is a profinite property for irreducible lattices in \(G\) outside type \(D_4\).  Assuming CSP, the existence of triality symmetries gives uniform and non-uniform profinitely isomorphic lattices in type \(D_4\).
\end{abstract}

\maketitle

\section{Introduction}

It is an active area of research to investigate which invariants and properties of a finitely generated, residually finite group \(\Gamma\) are \emph{profinite}, meaning that they can be inferred from the profinite completion~\(\widehat{\Gamma}\).  Lattices \(\Gamma \le G\) in semisimple Lie groups provide an enlightening test ground for these questions.  An important invariant of such lattices is the \emph{\(\Q\)-rank}.  If \(\Gamma\) is arithmetic in the sense that a finite central quotient is commensurable with an arithmetic subgroup of a simple algebraic \(k\)-group \(\mathbf{G}\) over a number field \(k\), then \(\operatorname{rank}_\Q (\Gamma) = \operatorname{rank}_k \mathbf{G}\).  There exist, however, also intrinsic geometric and cohomological characterizations of \(\operatorname{rank}_\Q (\Gamma)\) as explained in~\cite{Witte-Morris:arithmetic-groups}*{\S 2.2, p.\,23}.  It can for example be described as the dimension of the \emph{asymptotic cone} of the locally symmetric space \(\Gamma \backslash G / K\) where \(K \le G\) is a maximal compact subgroup.  Intuitively, the asymptotic cone is the simplicial complex arising when ``looking at \(\Gamma \backslash G / K\) from afar''.   As such, the invariant \(\operatorname{rank}_\Q (\Gamma)\) can be understood as a measure of non-uniformity.  In particular, \(\Gamma\) is uniform, meaning \(G/\Gamma\) is compact, if and only if \(\operatorname{rank}_\Q (\Gamma) = 0\).

\begin{theorem} \label{thm:same-q-rank}
  Let \(\Gamma, \Lambda \le G\) be irreducible lattices in a connected semisimple Lie group \(G\) with finite center and no compact factors.  Assume that \(G\) is neither isogenous to \(\operatorname{SO}^0(n,1)\) nor to \(\operatorname{SU}(n,1)\) for any \(n \ge 2\).  Suppose moreover that \(G\) does not have absolute type \(D_{2n}\) for any \(n \ge 2\).  If \(\Gamma\) and \(\Lambda\) have commensurable profinite completions, then
\[ \operatorname{rank}_\Q (\Gamma) = \operatorname{rank}_\Q (\Lambda). \]
\end{theorem}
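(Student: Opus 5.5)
By Margulis arithmeticity, which applies because \(G\) is not isogenous to \(\operatorname{SO}^0(n,1)\) or \(\operatorname{SU}(n,1)\), together with irreducibility, both \(\Gamma\) and \(\Lambda\) are arithmetic. Concretely, after passing to finite central quotients and finite index subgroups, there are number fields \(k, l\) and absolutely almost simple, simply connected groups \(\mathbf{G}\) over \(k\) and \(\mathbf{H}\) over \(l\) such that \(\Gamma\) is commensurable with \(\mathbf{G}(\mathcal{O}_k)\) and \(\Lambda\) with \(\mathbf{H}(\mathcal{O}_l)\). Passing to finite index subgroups and modding out finite normal subgroups does not change the commensurability class of the profinite completion. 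Nor does it change the \(\Q\)-rank. So we may work directly with these arithmetic subgroups.

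The next step is the profinite rigidity input. Since \(\widehat{\Gamma}\) and \(\widehat{\Lambda}\) are commensurable, the known results on profinite completions of arithmetic groups (Prasad--Rapinchuk type arguments on weak commensurability, or the adelic approach as in Kammeyer--Kionke--Raimbault--Sauer) give the following. Since \(G\) is not of rank one, the congruence kernels are finite or at least controlled, and the congruence completions \(\mathbf{G}(\widehat{\mathcal{O}}_k)\) and \(\mathbf{H}(\widehat{\mathcal{O}}_l)\) are commensurable. From this one deduces \(\mathbf{G}(\mathbb{A}_k^f) \cong \mathbf{H}(\mathbb{A}_l^f)\) up to isogeny. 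By the Pink/Margulis type structure theorem for open compact subgroups of adelic groups, this yields an isomorphism of the finite adele rings \(\mathbb{A}_k^f \cong \mathbb{A}_l^f\) matching places. It also yields \(\mathbf{G}_{k_v} \cong \mathbf{H}_{l_{\sigma(v)}}\) at every finite place \(v\), that is, \(k\) and \(l\) are locally isomorphic and the two groups have the same local forms at all non-archimedean places. In addition, \(\mathbf{G}\) and \(\mathbf{H}\) have the same absolute type, and since both realize \(G\) at the archimedean places, the archimedean forms agree up to the identification of the real and complex places.

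Now compute \(\operatorname{rank}_k \mathbf{G}\) from local data. The \(k\)-rank of \(\mathbf{G}\) is governed by its Tits index over \(k\). By the Hasse principle for \(H^1\) of the adjoint group, together with the classification of forms over number fields, the \(k\)-Tits index is determined by the local Tits indices at all places via the local--global principle for the Tits index (cf.\ Prasad--Rapinchuk). The key steps are as follows:
\begin{enumerate}[label=(\roman*)]
\item Reduce to the quasi-split inner form: \(\mathbf{G}\) and \(\mathbf{H}\) are inner twists of quasi-split forms determined by a Galois extension (for outer types). This quadratic or cubic extension is determined by local data at almost all places and hence must correspond under the adelic isomorphism.
\item The class in \(H^1(k, \overline{\mathbf{G}})\) injects into the product of local classes in all types except possibly type \(D_{2n}\) (more generally wherever \(\operatorname{Sha}\) of the center can interfere). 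Hence the \(k\)-rank equals the minimum over places of local ranks in a prescribed way.
\end{enumerate}
For non-\(D_{2n}\) types, both the \(k\)-form and its rank are then determined by the collection of local forms, which coincide. This gives \(\operatorname{rank}_k \mathbf{G} = \operatorname{rank}_l \mathbf{H}\).

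The main obstacle is step (ii), and in particular transporting this through the fact that \(k\) and \(l\) need only be locally isomorphic, not isomorphic. The Hasse principle is applied over different fields, so rank must be read off purely from local invariants. I would try to prove the statement that, outside type \(D_{2n}\), \(\operatorname{rank}_k \mathbf{G}\) equals the minimum of \(\operatorname{rank}_{k_v}\mathbf{G}\) over all places \(v\), or the corresponding formula for the Tits index, type by type. For inner type \(A\) this is the statement that the index of a central simple algebra equals the lcm of local indices. For outer \(A\), \(B\), \(C\), \(E_6\), \(E_7\) I would use the Witt/Hasse--Minkowski principle for hermitian forms. For \(D_{2n}\), the center \(\mu_2\times\mu_2\) allows the Tits index to fail to be locally determined, which is exactly the excluded case. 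Any discrepancy at archimedean places is absorbed because the real forms at the corresponding places both equal the relevant factors of \(G\).
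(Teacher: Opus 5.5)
\textbf{There is a genuine gap in the step from profinite completions to local isomorphisms.} You pass from commensurable \(\widehat{\Gamma}, \widehat{\Lambda}\) to commensurable congruence completions via finiteness of the congruence kernels, and you justify this by saying that \(G\) is not of rank one.

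But the hypotheses exclude only \(\operatorname{SO}^0(n,1)\) and \(\operatorname{SU}(n,1)\). The groups \(G = \operatorname{Sp}(n,1)\) and \(F_{4(-20)}\) are allowed, and both have real rank one; there CSP is unknown and is conjecturally false. Even in higher rank, CSP is open for many anisotropic groups, e.g.\ \(\operatorname{SL}_1(D)\), or anisotropic \({}^2D_4\) as the paper itself points out.

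The missing idea is \emph{adelic superrigidity} (Kammeyer--Kionke). It needs only algebraic superrigidity of \(\mathbf{G}\) and \(\mathbf{H}\), which is exactly what excluding \(\operatorname{SO}^0(n,1)\) and \(\operatorname{SU}(n,1)\) buys via Margulis, Corlette and Gromov--Schoen. Without any CSP input it yields a bijection \(v \mapsto v'\) of finite places, isomorphisms \(\eta_v \colon l_v \to k_{v'}\), and \(\mathbf{G} \cong_{\eta_v} \mathbf{H}\). You also still have to match the real places where \(\mathbf{G}\) and \(\mathbf{H}\) are anisotropic, since \(G\) does not see them. The paper does this via arithmetic equivalence of \(k\) and \(l\) (equal signature) and uniqueness of the compact simply connected form.

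\textbf{Your local-to-global step is also not correct as formulated.} The formula \(\operatorname{rank}_k \mathbf{G} = \min_v \operatorname{rank}_{k_v}\mathbf{G}\) is false even in inner type \(A\). Take \(D\) of degree \(6\) with local invariants \(\tfrac12,\tfrac12,\tfrac13,\tfrac23\) at four finite places. Then \(\operatorname{SL}_1(D)\) is \(k\)-anisotropic but isotropic at every place.

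The correct statement, valid for all types at once, is orbitwise: a \(\ast\)-orbit \(\Omega\) is globally distinguished if and only if it is a union of distinguished orbits at every place. This follows from Harder's local--global principle for the projective homogeneous varieties of parabolics of type \(\Delta \setminus \Omega\), with Chernousov supplying \(E_8\). Your type-by-type plan via hermitian forms would not reach \(E_6, E_7, E_8, F_4, G_2\).

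Moreover, the \(D_{2n}\) obstruction is not a failure of injectivity of \(H^1(k, \operatorname{Ad}\mathbf{G}) \to \prod_v H^1(k_v,\operatorname{Ad}\mathbf{G})\); the Hasse principle holds for adjoint groups in all types. The real issue is that \(k \neq l\) and the local isomorphisms \(\mathbf{G} \cong_{\eta_v} \mathbf{H}\) may be outer. What you need is that outside \(D_{2n}\) every local Tits diagram is invariant under all diagram symmetries (by Tits' classification). Then the set of locally distinguished orbits does not depend on the chosen identification. Combine this with the fact that the quasi-split inner forms are simultaneously split or non-split, since inert places detect non-splitness. Together these give isomorphic global Tits diagrams and hence equal ranks.
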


The exclusion of the groups \(\operatorname{SO}^0(n,1)\) and \(\operatorname{SU}(n,1)\) from the theorem has the effect that the arithmeticity theorems of Margulis~\cite{Margulis:discrete-subgroups}, Gromov--Schoen~\cite{Gromov-Schoen:harmonic}, and Corlette~\cite{Corlette:superrigidity} apply.  Hence \(\Gamma\) and \(\Lambda\) are arithmetic in the above sense.  In particular, all simple factors of \(G\) have the same complex type which we call the \emph{absolute type} of \(G\).  The case of absolute type \(D_{2n}\) cannot be included in the theorem.

\begin{theorem} \label{thm:different-q-rank}
  For every \(n \ge 2\), there exists a connected semisimple Lie group \(G\) of absolute type \(D_{2n}\) with higher rank, finite center, and no compact factors, as well as irreducible lattices \(\Gamma, \Lambda \le G\) such that \(\Gamma\) and \(\Lambda\) have commensurable profinite completions but
  \[ \operatorname{rank}_\Q (\Gamma) = n, \qquad \operatorname{rank}_\Q (\Lambda) = n-1.\]
  If \(n \ge 3\), no other two distinct \(\Q\)-ranks occur for any two such lattices.
\end{theorem}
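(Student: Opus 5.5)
The plan is to pass from profinite completions to adelic data and then construct two $k$-forms that agree at every finite place. By CSP, for arithmetic lattices with the congruence subgroup property, $\widehat{\Gamma}$ is commensurable with $\mathbf{G}(\mathbb{A}_k^f)$, the finite adele points of the simply connected $k$-group. So it suffices to produce a number field $k$ and two absolutely almost simple, simply connected $k$-groups $\mathbf{G}_1,\mathbf{G}_2$ of type $D_{2n}$ with the following properties: $\mathbf{G}_{1,v}\cong \mathbf{G}_{2,v}$ for every finite place $v$; $\prod_{v\mid\infty}\mathbf{G}_1(k_v)$ and $\prod_{v\mid\infty}\mathbf{G}_2(k_v)$ are isomorphic modulo compact factors, so both arithmetic groups are lattices in the same $G$; and $\operatorname{rank}_k\mathbf{G}_1=n$, $\operatorname{rank}_k\mathbf{G}_2=n-1$. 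Choosing higher real rank guarantees CSP, or at least the weaker statement that the congruence kernels are finite and the completions commensurable. Here the author may instead rely on known CSP results for isotropic groups; both groups are $k$-isotropic when $n\ge2$, since $n-1\ge1$, so CSP holds by Raghunathan's theorem.

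For the construction I would take $k=\Q$ or a real quadratic field and use inner forms of type ${}^1D_{2n}$, that is, $\mathbf{G}=\operatorname{Spin}(h)$ for $h$ a hermitian form over a quaternion division algebra $D$ with an involution, or over $k$ itself. The special feature of $D_{2n}$ is that the center $\mu_2\times\mu_2$ and the Tits algebras give extra freedom: an inner form is determined by a class in $H^1(k,\mathbf{G}^{ad})$, whose Tits class lies in $\operatorname{Br}_2(k)^2$. The key idea is to apply the Hasse principle for $H^1(k,\mathbf{G}^{ad})$ together with the local–global exact sequence for Brauer groups, which says that the local invariants at all places must sum to zero. We want local classes that agree at all finite places but differ at real places, in a way that changes the global $k$-rank and not merely the real groups. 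Concretely, let $\mathbf{G}_1=\operatorname{Spin}(q)$ for a quadratic form $q$ of dimension $4n$ and Witt index $n$ over $k$, suitably chosen at infinity; at a real place $v$ this gives $\operatorname{SO}(p,q')$. Then let $\mathbf{G}_2$ be the form with the same local data at all finite places, and at two real places swap $\operatorname{SO}(p,q')$ with $\operatorname{SO}^*(4n)$, or swap the two half-spin-type inner forms that are abstractly isomorphic as real Lie groups. For $D_{2n}$ there are real forms that become isomorphic via an outer automorphism while their Tits classes differ. Such a swap changes the global Brauer class by an even number of real invariants, so it is consistent. The resulting $\mathbf{G}_2$ comes from a quaternion algebra ramified exactly at those two real places. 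Its $k$-rank drops to $n-1$, because the Tits index must be compatible with the index of $D$: the rank-$n$ index in $D_{2n}$ forces the relevant Tits algebras to split, while $D$ is nonsplit.

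To show that no other pair of $\Q$-ranks occurs for $n\ge3$, I would argue as follows. Commensurable profinite completions give $\mathbf{G}_{1,v}\cong\mathbf{G}_{2,v}$ at almost all finite places (again via CSP, or via the paper's earlier Theorem~\ref{thm:same-q-rank} machinery), hence at all finite places after adjustment. The Tits indices over $k$ are then determined by local indices up to the ambiguity in the Brauer classes at real places. Among the possible Tits indices of ${}^1D_{2n}$ with fixed local data, only the top two, $n$ and $n-1$, can be interchanged by this ambiguity. This is the step I expect to be the main obstacle: enumerating the Tits indices of inner $D_{2n}$ forms and checking, via the Hasse principle for $k$-rank, that the local anisotropic kernels pin down the global rank except for a single step. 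For $n=2$, triality adds further forms, which explains why that case is excluded.
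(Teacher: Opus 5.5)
Your outer frame matches the paper's. You want two forms over one $k$, isomorphic at every finite place and with the same non-compact real factors, both $k$-isotropic so that CSP holds by Raghunathan/Prasad--Raghunathan. The construction inside that frame, however, does not work.

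\textbf{The construction.} If $\mathbf{G}_1=\operatorname{Spin}(q)$, every localization of $\mathbf{G}_1$ is the spin group of a quadratic form. Its Tits diagram is therefore invariant under the diagram symmetry. By Proposition~\ref{prop:distinguished-orbits}, any group isomorphic to $\mathbf{G}_1$ at all places then has the same $k$-rank, so $\mathbf{G}_2$ must differ at some real place. But replacing $\operatorname{SO}(p,q')$ by $\operatorname{SO}^*(4n)$ changes the Lie group for $n\ge 3$, so the lattices no longer lie in a common $G$. Swapping the two inner forms isomorphic to $\operatorname{Spin}^*(4n)$ is unavailable, because $\mathbf{G}_1$ has no such factor; over $\Q$ there is also only one real place.

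The deeper problem is that you check global consistency only on the Brauer class of $D$. The obstruction lives in $H^2(k,\mu_2\times\mu_2)$. Under $\delta^1$, the two asymmetric diagrams in~\eqref{eq:asymmetric-tits-diagrams} go to $(1,0)$ and $(0,1)$, and the symmetric ones go to $(0,0)$ and $(1,1)$. The local images must sum to $(0,0)$, i.e.\ both half-spin Tits algebras must be global algebras. The $(1,1)$-places are common to both forms, and the forms differ only by $(1,0)\leftrightarrow(0,1)$ swaps. A parity count then shows the rank-$(n-1)$ form needs each of $(1,0)$ and $(0,1)$ at an even, positive number of places, so at least four asymmetric places in total.

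If $D$ is ramified exactly at two real places, those are the only asymmetric places. Parity then forces both forms to be ``equal type'' there or both ``mixed type'', so the ranks agree. The paper instead takes $k=\Q$ and $\mathbf{G}_0=\operatorname{Spin}(2n,2n)$. It prescribes $(1,0)$ at four finite places for one class, and $(1,0),(1,0),(0,1),(0,1)$ for the other, which keeps $G=\operatorname{Spin}(2n,2n)$. Four real places would also work, giving $\operatorname{Spin}^*(4n)$ factors.

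Your explanation of the rank drop is also off. $\operatorname{Spin}^*(4n)$ has rank $n$ with $D=\mathbb{H}$ nonsplit. What matters is that each end node is undistinguished at some place. You then need Harder's principle again to see that the rank is exactly $n-1$ and not smaller.

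\textbf{The uniqueness statement.} You give no argument here, and the framing is misleading. Real places are not an extra source of ambiguity. Adelic superrigidity, the common $G$, and the equal signatures of the arithmetically equivalent fields $k,l$ let one extend the local isomorphisms to all places. No ``adjustment'' is needed, and CSP is unavailable in the anisotropic case anyway. The ambiguity sits at whichever places carry one of the diagrams in~\eqref{eq:asymmetric-tits-diagrams}, finite or real.

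The missing step is short (Proposition~\ref{prop:only-n-and-n-1}). By Proposition~\ref{prop:distinguished-orbits}, if the ranks differ then both asymmetric diagrams occur locally for $\mathbf{H}$ but only one occurs for $\mathbf{G}$. So the corresponding end node is distinguished everywhere for $\mathbf{G}$, which bounds the ranks by $n$ and $n-1$. Equality holds because, by the classification of local indices, a long-arm node undistinguished at some place forces every node to its right, including that end node, to be undistinguished there.
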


Our methods will more precisely reveal that for \(n \ge 3\), such lattices exist if and only if the simple factors of \(G\) are isogenous to either
\[ \operatorname{SO}^0(2n,2n), \ \operatorname{SO}^*(4n), \text{ or } \operatorname{SO}(4n; \C). \]
Note that Theorems~\ref{thm:same-q-rank} and~\ref{thm:different-q-rank} say in particular that uniformity is a profinite property for higher rank lattices outside type \(D_4\).  In type \(D_4\), the situation is more subtle.  We will demonstrate that the existence of triality symmetries should give counterexamples to the profiniteness of uniformity in type \(D_4\).  But for the moment, the result must remain conditional on \emph{Serre's conjecture} that every simply connected simple \(k\)-group with \(\sum_{v \mid \infty} \operatorname{rank}_{k_v} \mathbf{G} \ge 2\) should have the \emph{congruence subgroup property (CSP)}~\cite{Prasad-Rapinchuk:survey}.

\begin{theorem} \label{thm:triality}
  Suppose Serre's conjecture is true for anisotropic groups of type \({}^2 D_4\).  Then there exists a connected semisimple Lie group \(G\) of absolute type \(D_4\) and irreducible lattices \(\Gamma, \Lambda \le G\) such that \(\Gamma\) and \(\Lambda\) have commensurable profinite completions but
  \[ \operatorname{rank}_\Q (\Gamma) = 1, \qquad \operatorname{rank}_\Q (\Lambda) = 0.\]
\end{theorem}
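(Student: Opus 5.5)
We will realize \(\Gamma\) and \(\Lambda\) as arithmetic subgroups of two non-isomorphic simply connected \(k\)-forms \(\mathbf{G}_1, \mathbf{G}_2\) of type \(D_4\) over a common number field \(k\). The forms will be chosen so that \(\mathbf{G}_1 \otimes k_v \cong \mathbf{G}_2 \otimes k_v\) for every place \(v\), with \(\operatorname{rank}_k \mathbf{G}_1 = 1\) and \(\operatorname{rank}_k \mathbf{G}_2 = 0\). The profinite completion of a congruence arithmetic subgroup of \(\mathbf{G}_i(k)\) is an open subgroup of \(\mathbf{G}_i(\mathbb{A}_k^f)\). Local isomorphism at all finite places gives \(\mathbf{G}_1(\mathbb{A}_k^f) \cong \mathbf{G}_2(\mathbb{A}_k^f)\), because for almost all \(v\) the isomorphism can be chosen to carry hyperspecial parahorics to each other. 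Local isomorphism at the infinite places makes both lattices live in the same Lie group \(G = \prod_{v \mid \infty} \mathbf{G}_i(k_v)\), modulo compact factors, which we avoid by choosing the archimedean forms noncompact. To pass from congruence completions to \(\widehat{\Gamma}\) we invoke CSP. For the isotropic group \(\mathbf{G}_1\) of \(k\)-rank \(1\) this is known, by Raghunathan, once \(\sum_{v\mid\infty}\operatorname{rank}_{k_v}\ge 2\), and the congruence kernel is finite or trivial. For the anisotropic \(\mathbf{G}_2\) of type \({}^2D_4\) we use the hypothesized Serre conjecture. Commensurability of the profinite completions then follows; finite congruence kernels are harmless, since the kernel is central and we may pass to finite index.

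The crux is to construct such a pair, and this is where triality enters. A form of type \({}^2D_4\) is not determined by the local data of its quadratic form: the failure of the Hasse principle for the relevant Galois cohomology \(H^1(k, \mathbf{G}^{\mathrm{ad}})\) comes from the outer automorphism group \(S_3\). Concretely, a \({}^2D_4\) group with quasi-split inner form over \(L=k(\sqrt{d})\) corresponds to a pair consisting of an algebra with orthogonal involution \((A,\sigma)\) of degree \(8\) and its Clifford algebra \(C(A,\sigma) = C_+ \times C_-\). Triality permutes the three degree-\(8\) algebras \(A, C_+, C_-\) when they are defined over the right étale algebras. In the \({}^2D_4\) setting one of them is over \(k\) and the other two are swapped by \(\operatorname{Gal}(L/k)\). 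I would try to realize two groups with "the same" triple of local invariants but a different choice of which component is the \(k\)-algebra. Concretely, take \(k = \Q\) or a real quadratic field, and first pick \(\mathbf{G}_1 = \operatorname{Spin}(q)\) for a quadratic form \(q\) of dimension \(8\), discriminant \(d\), and Witt index \(1\) over \(k\), with prescribed Hasse invariants. We then seek \(\mathbf{G}_2\) in the same genus, meaning locally isomorphic everywhere, but with \(k\)-rank \(0\). In Galois-cohomological terms, we need a class in \(H^1(k,\mathbf{G}_1^{\mathrm{ad}})\) that is locally trivial up to outer automorphisms at each place but globally nontrivial, twisting a rank-one form into an anisotropic one. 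Locally at each \(v\), an isomorphism \(\mathbf{G}_1\otimes k_v\cong \mathbf{G}_2 \otimes k_v\) may be an outer one, exchanging \(A\otimes k_v\) with \(C_\pm\otimes k_v\), which is allowed when \(L\otimes k_v\) splits. This is exactly the flexibility that exists only in type \(D_4\).

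So the plan for the main step is as follows. Choose \(L/k\) and local data such that at finitely many split places \(v\) of \(L/k\), \(A_v\) is split while one of \(C_{\pm,v}\) is a division (or index-\(2\)) algebra. Then define \(\mathbf{G}_2\) as the form whose \(k\)-algebra is locally given by \(C_{+,v}\) at those places and \(A_v\) elsewhere. Use the Albert–Brauer–Hasse–Noether theorem and the reciprocity constraints (sum of local invariants zero, compatibility \([C_+]+[C_-]=[A]\) restricted to \(L\), which is the Tits class relation) to show the global object exists. Isotropy of the involution \(\sigma\) of \(k\)-rank \(1\) requires \(A\) split with isotropic form, while the twisted algebra becomes nonsplit of index \(2\) or \(4\). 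Anisotropy over \(k\) then follows because a nonsplit \(A\) of index \(\ge 2\) with an anisotropic involution has \(k\)-rank \(0\); we check this by a local obstruction at one place where the new algebra forces anisotropy. I expect this parity and reciprocity bookkeeping, ensuring both that the global twisted form exists and that it is anisotropic, to be the main obstacle. The infinite places need only be chosen isotropic and identical for both forms (for example \(\operatorname{SO}(4,4)\)-type or \(\operatorname{SO}(6,2)\)), guaranteeing higher total real rank so that CSP applies in the isotropic case and the conjecture is needed only for \(\mathbf{G}_2\).
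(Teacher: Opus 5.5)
Your global framework matches the paper's. You use two everywhere locally isomorphic \(k\)-forms of type \({}^2D_4\), CSP for the isotropic one, Serre's conjecture for the anisotropic one, and isomorphic finite adelic groups. Exchanging \(A_v\) with \(C_{\pm,v}\) at finite places split in \(L\) is exactly the triality twist the paper performs at \(v_1,v_2\). \textbf{The gap is the anisotropy of \(\mathbf{G}_2\), which is the heart of the construction.}

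Write \(a_1\) for the end node fixed by the global \(\ast\)-action, \(a_2\) for the central node, and \(a_3,a_4\) for the swapped end nodes. By Harder's local--global principle (Proposition~\ref{prop:distinguished-orbits}), \(\mathbf{G}_2\) is anisotropic if and only if each of \(\{a_1\}\), \(\{a_2\}\), \(\{a_3,a_4\}\) fails to be distinguished at some place.
\begin{itemize}
\item Your twist at a split finite place turns the diagram \(\{a_1,a_2\}\) into \(\{a_2,a_3\}\). It kills \(a_1\) and \(\{a_3,a_4\}\), but not \(a_2\). In your language, \(A_v\) has index \(2\) but \(\sigma_v\) still admits isotropic right ideals of reduced dimension \(2\).
\item In fact \(a_2\) is distinguished for every group of type \(D_4\) over a \(\mathfrak{p}\)-adic field. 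Among real forms, only \(\operatorname{Spin}(8)\) and \(\operatorname{Spin}(7,1)\) leave it undistinguished.
\end{itemize}
So there is no ``local obstruction where the new algebra forces anisotropy''. ``Anisotropic involution'' is precisely what your argument assumes rather than proves.

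Since \(\mathbf{G}_1\) is \(k\)-isotropic, it is compact at no place. Hence both groups must be \(\operatorname{Spin}(7,1)\) at some real place \(v\), with \(L \otimes_k k_v \cong \C\). If all archimedean forms are as you suggest (\(\operatorname{SO}(4,4)\), \(\operatorname{SO}(6,2)\), or complex places), then \(\{a_2\}\) is distinguished everywhere, and \(\mathbf{G}_2\) is necessarily isotropic.

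Your requirement that \(q\) have Witt index \(1\) over \(k\) does secretly force a real place of signature \((7,1)\), because \(8\)-dimensional forms over \(\mathfrak{p}\)-adic fields have Witt index at least \(2\). But this contradicts your stated archimedean choice, and you never identify this place as the source of anisotropy.

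This also excludes \(k=\Q\). The unique real place would carry \(\operatorname{Spin}(7,1)\), so the lattices would sit in a real rank one group where neither Raghunathan's CSP nor Serre's conjecture is available. The paper's construction avoids all of this as follows.
\begin{itemize}
\item It takes \(k\) neither \(\Q\) nor totally imaginary, and chooses \(l/k\) complex at a real place \(v\).
\item It prescribes the rank one form at \(v\) for both classes \(\alpha,\beta \in H^1(k,\operatorname{Ad}\mathbf{G_1})\). This kills \(a_2\) and \(\{a_3,a_4\}\) while keeping \(a_1\) for \(\mathbf{G}\).
\item It places the diagrams \(\{a_1,a_2\}\) versus \(\{a_2,a_3\}\) at two finite places split in \(l\). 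An even number of such places is needed for \(\beta\) to satisfy the global reciprocity condition.
\item The remaining infinite places supply the higher rank.
\end{itemize}
Then \(a_1\) is distinguished everywhere for \(\mathbf{G}\), so \(\operatorname{rank}_k\mathbf{G}=1\). No orbit survives for \(\mathbf{H}\), so \(\operatorname{rank}_k\mathbf{H}=0\).
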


More precisley, the lattice \(\Lambda\) in the theorem can be described as an arithmetic subgroup of the spinor group \(\mathbf{G} = \operatorname{Spin}(h)\) of a globally anisotropic skew-hermitian form \(h\) in four quaternionic variables.  Even though \(\mathbf{G}\) has type \({}^2 D_4\), hence is not a trilatarian form, A.\,Rapinchuk informed us that centrality of the congruence kernel is currently not known in this case (but should not be entirely out of reach either).

Because of the geometric relevance, we state the immediate corollaries of our results on the profiniteness of uniformity as separate theorems.

\begin{theorem} \label{thm:uniform-profinite}
  Let \(\Gamma, \Lambda \le G\) be irreducible lattices in a connected semisimple Lie group \(G\) with finite center and no compact factors.  Assume \(G\) is neither isogenous to \(\operatorname{SO}^0(n,1)\) nor to \(\operatorname{SU}(n,1)\) for any \(n \ge 2\).  Suppose moreover \(G\) is not of absolute type \(D_4\) and \(\Gamma\) and \(\Lambda\) have commensurable profinite completions.  Then \(\Gamma\) is uniform if and only if \(\Lambda\) is.
\end{theorem}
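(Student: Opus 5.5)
We derive Theorem~\ref{thm:uniform-profinite} from Theorems~\ref{thm:same-q-rank} and~\ref{thm:different-q-rank}, using that uniformity is the same as \(\operatorname{rank}_\Q = 0\). We first check that both lattices are arithmetic, so that their \(\Q\)-ranks are defined. If \(G\) has real rank one, the excluded cases \(\operatorname{SO}^0(n,1)\) and \(\operatorname{SU}(n,1)\) leave only \(\operatorname{Sp}(n,1)\) and \(F_{4(-20)}\); for these, Corlette and Gromov--Schoen give arithmeticity. If \(G\) has higher rank, Margulis' theorem applies. In either case \(\Gamma\) and \(\Lambda\) are arithmetic. By the characterization recalled in the introduction, \(\Gamma\) is uniform if and only if \(\operatorname{rank}_\Q(\Gamma) = 0\), and likewise for \(\Lambda\).

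We split into two cases. Suppose first that the absolute type of \(G\) is not \(D_{2n}\) for any \(n \ge 2\). Then all hypotheses of Theorem~\ref{thm:same-q-rank} hold, so \(\operatorname{rank}_\Q(\Gamma) = \operatorname{rank}_\Q(\Lambda)\). In particular one rank vanishes exactly when the other does.

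Suppose next that the absolute type is \(D_{2n}\) with \(n \ge 3\), since type \(D_4\) is excluded. Here we use the final sentence of Theorem~\ref{thm:different-q-rank}. It says that the only pair of distinct \(\Q\)-ranks that can occur for profinitely commensurable irreducible lattices is \(\{n, n-1\}\). Since \(n - 1 \ge 2 > 0\), neither rank can be zero unless the ranks are equal. So if \(\operatorname{rank}_\Q(\Gamma) = 0\), then \(\operatorname{rank}_\Q(\Lambda) = 0\), and conversely.

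The main obstacle is that the second case leans on the uniqueness part of Theorem~\ref{thm:different-q-rank}, not only on its existence part. That statement is phrased for lattices of the kind constructed there, namely in \(G\) of higher rank with finite center and no compact factors. We must check that it covers every pair \(\Gamma, \Lambda\) of irreducible lattices with commensurable profinite completions in such a \(G\). This is the case because type \(D_{2n}\) with \(n \ge 3\) never occurs among \(\operatorname{SO}^0(m,1)\) or \(\operatorname{SU}(m,1)\), so arithmeticity holds and the classification behind that uniqueness claim applies. With that checked, the argument is complete.
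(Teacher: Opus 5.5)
Your argument is correct and matches the paper, which obtains this theorem as an immediate corollary of Theorems~\ref{thm:same-q-rank} and~\ref{thm:different-q-rank}: in type \(D_{2n}\) with \(n \ge 3\), the only possible pair of distinct \(\Q\)-ranks is \(\{n, n-1\}\), and both are positive. One correction to your last paragraph: type \(D_{2n}\) does occur among real rank one groups, since \(\operatorname{SO}^0(4n-1,1)\) has absolute type \(D_{2n}\); what places \(\Gamma, \Lambda\) within the uniqueness clause of Theorem~\ref{thm:different-q-rank} is the hypothesis that \(G\) is not isogenous to \(\operatorname{SO}^0(m,1)\), which gives arithmeticity and adelic superrigidity and also forces \(G\) to have higher rank.
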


\begin{theorem} \label{thm:uniform-non-uniform} 
  Suppose Serre's conjecture is true for anisotropic groups of type \({}^2 D_4\).  Then there exists a connected semisimple Lie group \(G\) of absolute type~\(D_4\) and irreducible lattices \(\Gamma, \Lambda \le G\) with commensurable profinite completions such that \(\Gamma\) is non-uniform and \(\Lambda\) is uniform.
\end{theorem}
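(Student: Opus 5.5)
This follows directly from Theorem~\ref{thm:triality}, using the characterization recalled in the introduction: a lattice \(\Gamma\) in \(G\) is uniform if and only if \(\operatorname{rank}_\Q(\Gamma) = 0\). So I would not build anything new; I would only translate the rank statement into a statement about uniformity.

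Assume Serre's conjecture holds for anisotropic groups of type \({}^2 D_4\). Theorem~\ref{thm:triality} then gives a connected semisimple Lie group \(G\) of absolute type \(D_4\) and irreducible lattices \(\Gamma, \Lambda \le G\) with commensurable profinite completions, where \(\operatorname{rank}_\Q(\Gamma) = 1\) and \(\operatorname{rank}_\Q(\Lambda) = 0\). Since \(\operatorname{rank}_\Q(\Gamma) \neq 0\), \(\Gamma\) is non-uniform. Since \(\operatorname{rank}_\Q(\Lambda) = 0\), \(\Lambda\) is uniform. That is exactly the claim.

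The one point to check is that the equivalence between uniformity and vanishing \(\Q\)-rank applies in this setting. Here \(G\) has absolute type \(D_4\), so it is not isogenous to \(\operatorname{SO}^0(n,1)\) or \(\operatorname{SU}(n,1)\). The lattices in Theorem~\ref{thm:triality} are in any case constructed as arithmetic groups: \(\Lambda\) comes from the anisotropic spinor group \(\operatorname{Spin}(h)\), and \(\Gamma\) from a \(k\)-isotropic form of \(k\)-rank one.

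For such arithmetic lattices, the equivalence \(\operatorname{rank}_k \mathbf{G} = 0\) if and only if \(G/\Gamma\) is compact is the Borel--Harish-Chandra and Godement compactness criterion. Together with \(\operatorname{rank}_\Q(\Gamma) = \operatorname{rank}_k \mathbf{G}\), this gives the needed equivalence. So there is no real obstacle; all the substance lies in Theorem~\ref{thm:triality}.
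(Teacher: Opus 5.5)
Your proposal is correct and matches the paper, which presents this theorem as an immediate corollary of Theorem~\ref{thm:triality}: an (arithmetic) lattice is uniform if and only if its \(\Q\)-rank vanishes, by the Borel--Harish-Chandra and Godement criterion. One correction to your side remark: absolute type \(D_4\) does not by itself rule out \(\operatorname{SO}^0(7,1)\), which has type \(D_4\) (the paper even allows \(G = \operatorname{SO}^0(7,1) \times \operatorname{SO}^0(7,1)\)); this is harmless, since the \(G\) produced in Theorem~\ref{thm:triality} has higher rank and the lattices are arithmetic by construction anyway.
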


In the last theorem, various choices for \(G\) are possible, one of them being \(G = \operatorname{SO}^0(7,1) \times \operatorname{SO}^0(7,1)\).  In that case \(\Gamma\) and \(\Lambda\) act irreducibly on \(\mathbb{H}^7 \times \mathbb{H}^7\), the product of two copies of 7-dimensional hyperbolic space, and \(\Gamma\) has a non-compact quotient while \(\Lambda\) has a compact quotient.

\medskip
We briefly discuss the situation in the remaining case of lattices in \(\operatorname{SO}^0(n,1)\) and \(\operatorname{SU}(n,1)\) for \(n \ge 2\).  For these lattices, profiniteness of uniformity is closely related to the well-known problem whether such lattices are \emph{good} in the sense of Serre~\cite{Serre:galois-cohomology}*{p.\,16}.  Indeed, it is clear that the \emph{virtual cohomological dimension} (vcd) of a good group is captured in the profinite completion.  For real rank one Lie groups, the vcd of a non-uniform lattice is one less than the vcd of a uniform lattice (which is \(n\) for \(\operatorname{SO}^0(n,1)\) and \(2n\) for \(\operatorname{SU}(n,1)\)).  Hence any two good lattices in \(\operatorname{SO}^0(n,1)\) or \(\operatorname{SU}(n,1)\) are either both uniform or both non-uniform if they have commensurable profinite completions.

All lattices in \(\operatorname{SO}^0(3,1)\) are good as a consequence of virtual fibering~\cites{Agol:haken, Wise:quasiconvex-hierarchy}.  Moreover, \emph{arithmetic hyperbolic lattices of simplest type} are good~\cite{Kolpakov-et-al:embedding}*{Theorem~6.5}.  This includes all non-uniform arithmetic lattices in all \(\operatorname{SO}^0(n,1)\) and also all uniform arithmetic lattices if \(n\) is even.  Establishing goodness is particularly intriguing for lattices in \(\operatorname{SU}(n,1)\), where for \(n \ge 2\) not a single lattice is known to be good or non-good.  Some recent efforts were undertaken by M.\,Stover in~\cite{Stover:ball-quotients} who shows in particular that a cocompact arithmetic lattice \(\Gamma \le \operatorname{SU}(n,1)\) of simplest type satisfies \(\operatorname{vcd}(\widehat{\Gamma}) \ge 2n\).  We remark that even more recently, Xiaoyu Xu~\cite{Xu:psl2c} obtained the seminal result that lattices in \(\operatorname{SO}^0(3,1) \cong \operatorname{PSL_2}(\C)\) are profinitely rigid among themselves, hence profinitely isomorphic lattices in \(\operatorname{SO}^0(3,1)\) are even conjugate by Mostow rigidity.

\medskip
Since Theorem~\ref{thm:uniform-profinite} says that for the most part, uniformity is a profinite property for lattices in the same Lie group, one may ask to what extent uniformity remains profinite if the lattices are allowed to lie in different Lie groups.  Many other properties (Property \((T)\)~\cite{Aka:property-t}, higher \(\ell^2\)-Betti numbers~\cite{Kammeyer-Sauer:spinor}, Euler characteristic~\cite{Kammeyer-et-al:profinite-invariants}, bounded cohomology~\cite{Echtler-Kammeyer:bounded-cohomology}) fail to be profinite for lattices in different Lie groups and uniformity is no exception.  The failure is most striking in certain exceptional types.

\begin{theorem} \label{thm:different-lie-groups}
  Let \(G\) be a connected non-compact real or complex simple Lie group with finite center of type \(E_8\), \(F_4\), or \(G_2\) but not of real type \(F_{4(-20)}\).  Then for every uniform lattice \(\Gamma \le G\), there exists a semisimple Lie group \(H\) and a non-uniform irreducible lattice \(\Lambda \le H\) such that \(\Gamma\) and \(\Lambda\) have commensurable profinite completions.
\end{theorem}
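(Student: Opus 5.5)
The idea is to use that exceptional groups of type \(E_8\), \(F_4\), \(G_2\) have trivial center and no outer automorphisms. For such groups the profinite completion of an arithmetic lattice is determined, up to commensurability, by the adelic group \(\mathbf{G}(\mathbb{A}_{k}^f)\) via the congruence subgroup property.

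\textbf{Arithmetic setup.} A uniform lattice \(\Gamma \le G\) is arithmetic: \(G\) is simple of real rank \(\ge 1\) and is not of type \(F_{4(-20)}\), so Margulis superrigidity applies in higher rank and Corlette/Gromov--Schoen in rank one. Hence \(\Gamma\) is commensurable with \(\mathbf{G}(\mathcal{O}_k)\) for an absolutely simple \(k\)-group \(\mathbf{G}\) of type \(E_8\), \(F_4\) or \(G_2\). Uniformity means \(\mathbf{G}\) is \(k\)-anisotropic. For these types \(\mathbf{G}\) is both simply connected and adjoint, and all its forms are inner.

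\textbf{Constructing a split partner.} Let \(\mathbf{H}\) be the \(k\)-split form of the same type. It remains to understand the local behaviour. At non-archimedean places \(v\), the Galois cohomology \(H^1(k_v,\mathbf{G}^{\mathrm{ad}})\) vanishes for simply connected groups by Kneser, and here \(\mathbf{G}=\mathbf{G}^{\mathrm{ad}}\). So every form is split at every finite place, giving \(\mathbf{G}(k_v)\cong\mathbf{H}(k_v)\) for all finite \(v\), and hence \(\mathbf{G}(\mathbb{A}^f_k)\cong\mathbf{H}(\mathbb{A}^f_k)\). The archimedean places are where the two forms differ. Since \(\mathbf{H}\) is split, \(\mathbf{H}(\mathcal{O}_k)\) is a non-uniform irreducible lattice in \(H=\prod_{v\mid\infty}\mathbf{H}(k_v)\), where each real factor is split and each complex factor is the complex group. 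Irreducibility follows because \(\mathbf{H}\) is absolutely simple over \(k\). Passing to the identity component and dropping compact factors (there are none here) yields a semisimple Lie group with finite center.

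\textbf{Comparing profinite completions.} Both \(\mathbf{G}\) and \(\mathbf{H}\) need CSP. For \(\mathbf{H}\), CSP holds because \(\mathbf{H}\) is isotropic of \(k\)-rank \(\ge 2\), or of \(k\)-rank \(\ge1\) with \(\sum_v \operatorname{rank}_{k_v}\ge 2\), which applies since split \(G_2\) and \(F_4\) have rank \(\ge2\). For anisotropic \(\mathbf{G}\) of type \(E_8\), \(F_4\), \(G_2\), centrality of the congruence kernel is known by Rapinchuk/Segev--Raghunathan for these exceptional types when \(\sum_{v\mid\infty}\operatorname{rank}_{k_v}\mathbf{G}\ge 2\). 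In the rank one case (real \(G\) of rank one is only \(F_{4(-20)}\), which is excluded), CSP at the real place of \(G\) has rank \(\ge 2\) except for the excluded type. Since the groups are simply connected, the metaplectic kernel is trivial, or at most finite, so the congruence kernel is finite. Then \(\widehat{\Gamma}\) is commensurable with \(\mathbf{G}(\widehat{\mathcal{O}}_k)\), and similarly \(\widehat{\Lambda}\) with \(\mathbf{H}(\widehat{\mathcal{O}}_k)\). These two are commensurable open compact subgroups of isomorphic adelic groups, which gives the claim.

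\textbf{Main obstacle.} The hard step is justifying CSP (finite congruence kernel) for the anisotropic group \(\mathbf{G}\). I would cite the known results for anisotropic exceptional types other than trialitarian \(D_4\) and \(E_6\), via Prasad--Rapinchuk's survey. I also need to check that the rank hypothesis \(\sum_{v\mid\infty}\operatorname{rank}_{k_v}\ge 2\) holds. If \(k=\Q\) and \(G\) has real rank one, the only candidate is \(F_{4(-20)}\), which is excluded; \(E_8\) and \(G_2\) have no real rank one forms.
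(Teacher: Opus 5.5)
Your proposal is correct and follows essentially the same route as the paper. Both arguments go through Margulis arithmeticity, then Kneser's vanishing of \(H^1\) over \(\mathfrak{p}\)-adic fields for these types with trivial center and no outer automorphisms (so the anisotropic form agrees with the global \(k\)-split form at every finite place), then Borel--Harish-Chandra for the split form, and finally finite congruence kernels for both groups to identify the profinite completions with open compact subgroups of isomorphic finite-adele groups.

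Two cosmetic remarks. The rank-one results of Corlette and Gromov--Schoen are not needed, since excluding \(F_{4(-20)}\) already forces rank at least two at the given place. And the finiteness of the congruence kernel for the anisotropic form is exactly what the paper cites from Platonov--Rapinchuk (Theorems 9.23 and 9.24); your attribution to Rapinchuk/Segev--Raghunathan is slightly off, but the claim itself is the same.
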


Theorem~\ref{thm:different-lie-groups} does not immediately carry over to lattices in simple Lie groups of other Cartan--Killing types.  In particular, there exist \emph{congruence rigid} algebraic groups \cite{Baumann-Kammeyer:congruence-rigidity}.  The arithmetic subgroups of some of these define uniform lattices in higher rank semisimple Lie groups.  For example any central division algebra \(D\) of degree three over \(\Q\) determines the anisotropic \(\Q\)-group \(\mathbf{G} = \operatorname{SL_1}(D)\) of type \({}^1 A_2\) and an arithmetic subgroup \(\Gamma \le \mathbf{G}\) is a uniform lattice in \(\operatorname{SL}_3(\R)\).  According to Serre's conjecture, though again open in this particular case, \(\mathbf{G}\) should have CSP.  Assuming \(\mathbf{G}\) does have CSP, another open problem asks whether the arithmetic subgroups of an algebraic group with CSP possess proper Grothendieck subgroups~\cite{Platonov-Rapinchuk:algebraic-groups}*{Problem p.\,434}.  If the answer is negative, then \(\Gamma\) is \emph{profinitely solitary} so that the commensurability class of \(\Gamma\) is determined by the commensurability class of \(\widehat{\Gamma}\) among all finitely generated residually finite groups~\cite{Baumann-Kammeyer:congruence-rigidity}*{Theorem~4}.  By strong Mostow rigidity~\cite{Margulis:discrete-subgroups}*{Theorem VII.7.5, p.\,254}, Theorem~\ref{thm:uniform-profinite} implies that uniformity would be a profinite property for \(\Gamma\) among all lattices in all semisimple Lie groups.

\medskip
The assumptions on \(\Gamma \le G\) in Theorems~\ref{thm:same-q-rank} and~\ref{thm:different-q-rank} imply that \(\Gamma\) is an arithmetic lattice.  Assuming that \(G\) has trivial center, the group \(\Gamma\) is moreover virtually torsion-free.  Thus an application of the Borel--Serre compactification gives that the virtual cohomological dimension is
\[ \operatorname{vcd}(\Gamma) = \dim G -\dim K - \operatorname{rank}_\Q (\Gamma) \]
where \(K \le G\) is a maximal compact subgroup~\cite{Borel-Serre:corners}*{Corollary~11.4.3}.  Hence we obtain the following consequence on the virtual cohomological dimension.

\begin{corollary}
\begin{enumerate}[label=(\roman*), leftmargin=2em, labelsep=0.6em, itemsep=0.6em, topsep=0.4em]
\item Let \(\Gamma, \Lambda \le G\) be irreducible lattices in a connected semisimple Lie group \(G\) with trivial center and no compact factors.  Assume that \(G\) is neither isogenous to \(\operatorname{SO}^0(n,1)\), nor to \(\operatorname{SU}(n,1)\), and does not have type \(D_{2n}\) for any \(n \ge 2\). If \(\Gamma\) and \(\Lambda\) have commensurable profinite completions, then
\[
  \operatorname{vcd}(\Gamma) = \operatorname{vcd}(\Lambda).
\]

\item For every \(n \ge 2\), there exist irreducible lattices \(\Gamma, \Lambda \le G\) in certain semisimple Lie groups of absolute type \(D_{2n}\) such that \(\Gamma\) and \(\Lambda\) have commensurable profinite completions but
  \[ \operatorname{vcd}(\Gamma)=4n^2-n, \qquad \operatorname{vcd}(\Lambda)=4n^2-n+1.
\]
\end{enumerate}
\end{corollary}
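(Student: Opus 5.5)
The corollary follows from Theorems~\ref{thm:same-q-rank} and~\ref{thm:different-q-rank} together with the Borel--Serre formula \(\operatorname{vcd}(\Gamma) = \dim G - \dim K - \operatorname{rank}_\Q(\Gamma)\) recalled above. The formula requires \(\Gamma\) to be arithmetic and virtually torsion-free.

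First I check that these hypotheses hold. Under the assumptions of (i), the arithmeticity theorems of Margulis, Gromov--Schoen and Corlette make \(\Gamma\) and \(\Lambda\) arithmetic. Since \(G\) has trivial center, it is linear (via the adjoint representation). Hence \(\Gamma\) and \(\Lambda\) are finitely generated linear groups and therefore virtually torsion-free by Selberg's lemma. So the formula applies to both lattices.

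For (i), Theorem~\ref{thm:same-q-rank} applies directly, because trivial center is a special case of finite center. It gives \(\operatorname{rank}_\Q(\Gamma) = \operatorname{rank}_\Q(\Lambda)\). The quantity \(\dim G - \dim K\) is the same for both lattices, so \(\operatorname{vcd}(\Gamma) = \operatorname{vcd}(\Lambda)\).

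For (ii), I take the lattices from Theorem~\ref{thm:different-q-rank} and pass to images in the adjoint group \(G/Z(G)\). Since \(Z(G)\) is finite, the images are irreducible lattices there. Each image differs from the original lattice by a finite central subgroup. Passing to finite-index torsion-free subgroups, the profinite completions stay commensurable and the \(\Q\)-ranks are unchanged. The \(\Q\)-ranks are \(n\) and \(n-1\), so the vcds differ by exactly one, with \(\Lambda\) having the larger value.

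It remains to match the numbers \(4n^2-n\) and \(4n^2-n+1\). The text after Theorem~\ref{thm:different-q-rank} indicates that one may take \(G\) locally isomorphic to \(\operatorname{SO}^0(2n,2n)\). Then
\[ \dim G - \dim K = \dim \operatorname{SO}(4n) - 2\dim \operatorname{SO}(2n) = (2n)^2 = 4n^2. \]
This gives \(\operatorname{vcd}(\Gamma) = 4n^2 - n\) and \(\operatorname{vcd}(\Lambda) = 4n^2 - n + 1\).

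The main obstacle is this choice in (ii). One has to make sure that the construction behind Theorem~\ref{thm:different-q-rank} can indeed be realized with a single simple factor \(\operatorname{SO}^0(2n,2n)\) for every \(n \ge 2\), including \(n=2\). If it cannot, the stated numbers would have to come from a different \(G\). The rest is bookkeeping.
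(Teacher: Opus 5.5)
Your proposal is correct and follows the paper's own route: Theorems~\ref{thm:same-q-rank} and~\ref{thm:different-q-rank} combined with the Borel--Serre formula \(\operatorname{vcd}(\Gamma) = \dim G - \dim K - \operatorname{rank}_\Q(\Gamma)\), where \(\dim G - \dim K = 4n^2\) for \(\operatorname{SO}^0(2n,2n)\). The obstacle you flag is settled by the proof of Theorem~\ref{thm:different-q-rank}: it takes \(k = \Q\) in Proposition~\ref{prop:differing-ranks}, where the classes \(\alpha, \beta\) are trivial at the real place, so both \(\Q\)-groups are isomorphic to \(\operatorname{Spin}(2n,2n)\) over \(\R\) for every \(n \ge 2\), including \(n = 2\).
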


\medskip
The proof strategy for Theorem~\ref{thm:same-q-rank} can be subsumed as follows.  Arithmeticity and adelic superrigidity~\cite{Kammeyer-Kionke:adelic-superrigidity} apply, hence we can assume that the involved lattices are given by arithmetic subgroups of absolutely almost simple algebraic groups defined over number fields and these groups are isomorphic over an isomorphism of the finite adele rings.  In particular, the same set of local Tits indices arises for both groups.  To prove the theorem, one would need to infer the global rank from the local Tits indices.  It will turn out that this is possible whenever the set of distinguished nodes in all Tits indices is invariant under the Dynkin diagram symmetries.  Only in type \(D_{2n}\), this is not granted and can indeed be exploited to prove Theorem~\ref{thm:different-q-rank}.  The classification of Tits indices implies that only the given values of \(\Q\)-ranks can occur if \(n \ge 3\).  In type \(D_4\) however, the additional triality symmetries of the Dynkin diagram lead to Theorem~\ref{thm:triality}.  To prove Theorem~\ref{thm:different-lie-groups}, we can draw on the fact that in these exceptional types, there exists only the split form over \(\mathfrak{p}\)-adic fields so that the congruence completion of the given uniform arithmetic group is commensurable with the congruence completion of an arithmetic subgroup of the global split form defined over the same number field.

\medskip
\noindent \textbf{Disclosure of AI assistance.}  The initial idea, results, and proofs were developed without AI assistance.  My first proof of Proposition~\ref{prop:distinguished-orbits}, however, was extracted from the methods of Prasad--Rapinchuk in~\cite{Prasad-Rapinchuk:prescribed}.  As an alternative, OpenAI ChatGPT 5.6 Sol suggested subsequently the more streamlined argument to apply the Harder local-global principle for homogeneous projective varieties to the varieties of \(k\)-parabolic subgroups.  For the convenience of the reader, I incorporated this shorter proof after a careful check and provided the relevant references.  ChatGPT was additionally applied to search the literature, to create the Tits diagram pictures, and for minor corrections at the final stage. In contrast, it was not used for structuring or wording the article.  I wrote the text entirely by myself and I remain responsible for its correctness.

\medskip
\noindent \textbf{Acknowledgements.} I am grateful to Giles Gardam and Ursula Hamenst{\"a}dt who asked about the profiniteness of uniformity when I was giving a talk in the Bonn GGT seminar earlier this year.  This work was supported by the GRK ``Algebro-Geometric Methods in Algebra, Arithmetic and Topology'', DFG 284078965.  I wish to thank Adrian Baumann, Daniel Echtler, and Andrei Rapinchuk for helpful discussions.

\section{Tits indices and local-global principles}

In this section, we prove the local-global principles around Tits indices of simple algebraic groups over number fields that lie at the core of our results.  Tits indices were introduced and classified in~\cite{Tits:classification}.  We briefly recall the definition.

Let \(K\) be a field, for our purposes of characteristic zero, and let \(\mathbf{G}\) be a connected and simply connected absolutely almost simple linear algebraic group defined over \(K\) (a \emph{K-group} in what follows).  Fix an algebraic closure \(\overline{K} / K\) and denote by \(\operatorname{Gal}(K) = \operatorname{Gal}(\overline{K}/K)\) the absolute Galois group of \(K\).  We choose a maximal \(K\)-split \(K\)-torus \(\mathbf{S} \le \mathbf{G}\) and a maximal \(K\)-torus \(\mathbf{T} \le \mathbf{G}\) with \(\mathbf{S} \le \mathbf{T}\).  Choose compatible orders of the character lattices \(X^*(\mathbf{S})\) and \(X^*(\mathbf{T})\) and let \(\Delta\) be a corresponding system of simple roots with respect to \(\mathbf{T}\).  We denote by \(\Delta_0 \subseteq \Delta\) the subset of those simple roots that vanish on \(\mathbf{S}\).  The elements of \(\Delta\) can be displayed in a suitable order as the nodes of the Dynkin diagram of \(\mathbf{G}\).  The Galois group \(\operatorname{Gal}(K)\) acts on the Dynkin diagram \(\Delta\) by the so-called \emph{\(\ast\)-action}: We have a \(\operatorname{Gal}(K)\)-action on \(X^*(\mathbf{T})\) such that every \(\sigma \in \operatorname{Gal}(K)\) translates \(\Delta\) to a new system of simple roots \(\sigma.\Delta\).  There exists a unique element \(w\) of the Weyl group with \(w.\sigma.\Delta = \Delta\) and we set \(\sigma^* = w \circ \sigma\).  The group \(\operatorname{Gal}(K)\) acts by diagram symmetries and \(\Delta_0\) is an invariant subset.

The \emph{Tits index} of \(\mathbf{G}\) consists of the Dynkin diagram \(\Delta\), the \(\operatorname{Gal}(K)\)-\(\ast\)-action on \(\Delta\), and the invariant subset \(\Delta_0\).  We obtain an associated \emph{Tits diagram} by drawing nodes in the Dynkin diagram close to one another whenever they lie in the same \(\operatorname{Gal}(K)\)-orbit and circling the orbits in the complement of \(\Delta_0\).  The latter orbits are called \emph{distinguished}.  The terminology of a Tits diagram is not standard.  We introduce the notion to describe what is left from a Tits index after discarding the \(\operatorname{Gal}(K)\)-action (and in particular the field \(K\)) but retaining the orbits and the information which of these are distinguished.  It is clear what it means for two Tits diagrams to be isomorphic.

For what follows, it will be convenient to note that Tits indices and diagrams can also be defined without reference to a \(K\)-split torus \(\mathbf{S}\), using instead the system of parabolic subgroups in \(\mathbf{G}\).  Indeed, given any maximal \(K\)-torus \(\mathbf{T} \le \mathbf{G}\) and system of simple roots \(\Delta\), the simple roots are in one-to-one correspondence with the \(\overline{K}\)-conjugacy classes of maximal parabolic subgroups of \(\mathbf{G}\).  These conjugacy classes form homogeneous varieties defined over~\(\overline{K}\), hence they are permuted by \(\operatorname{Gal}(K)\), defining the \(\operatorname{Gal}(K)\)-\(\ast\)-action on \(\Delta\) from above.  Since all maximal \(K\)-tori of \(\mathbf{G}\) are conjugate, one of them contains a maximal \(K\)-split torus.  We thus obtain from~\cite{Borel-Tits:reductifs}*{(1) in Section~6.3 and (2) in Section~6.4} that an orbit \(\Omega \subset \Delta\) of the \(\ast\)-action is distinguished if and only if the conjugacy class of the corresponding standard parabolic subgroup \(\mathbf{P}_{\Delta \setminus \Omega}\) has a representative defined over \(K\).  In that way, we recover the full information stored in the Tits diagram of \(\mathbf{G}\).  From this viewpoint, we may use the pair \((\mathbf{T}, \Delta)\) chosen in \(\mathbf{G}\) also for every base change \(\mathbf{G} \times_K L\) over a field extension \(L/K\) so that we can track the effect of base change on the Tits diagram even though it is not clear whether \(\mathbf{T} \times_K L\) contains a maximal \(L\)-split torus of \(\mathbf{G} \times_K L\).

\smallskip
From now on, we consider the special case that \(K = k\) is a number field.  When base changing \(\mathbf{G}\) from \(k\) to any completion \(k_v\) at a finite or infinite place, the Tits index may change in two possible ways: The set \(\Delta_0\) can shrink and the effectively acting quotient group of the \mbox{\(\ast\)-action} may be smaller if it was not trivial anyway (outside type \(D_4\), this means the action becomes trivial).  The next two propositions give some insight on the question how much of the global Tits diagram of \(\mathbf{G}\) can be recovered from the collection of local Tits diagrams.

\begin{proposition} \label{prop:distinguished-orbits}
  Let \(\mathbf{G}\) be a \(k\)-group and let \(\Omega_1, \ldots, \Omega_r\) be the orbits in the Tits diagram of \(\mathbf{G}\).  Then \(\Omega_i\) is distinguished if and only if it consists of distinguished orbits over every place \(v\) of \(k\).
\end{proposition}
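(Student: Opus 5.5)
The plan is to use the parabolic description of distinguished orbits given above, together with Harder's local-global principle for projective homogeneous varieties. Fix the pair \((\mathbf{T}, \Delta)\) in \(\mathbf{G}\) and use it for every base change \(\mathbf{G} \times_k k_v\). Let \(\Omega = \Omega_i\) be a \(\operatorname{Gal}(k)\)-orbit. By the criterion from~\cite{Borel-Tits:reductifs} recalled above, \(\Omega\) is distinguished over \(k\) if and only if the \(\overline{k}\)-conjugacy class of the standard parabolic \(\mathbf{P}_{\Delta \setminus \Omega}\) has a \(k\)-rational member. Since \(\Omega\) is \(\ast\)-invariant, the variety \(X_\Omega\) of parabolic subgroups of \(\mathbf{G}\) of type \(\Delta \setminus \Omega\) is defined over \(k\). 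It is a projective homogeneous \(\mathbf{G}\)-variety, and \(\Omega\) is distinguished exactly when \(X_\Omega(k) \neq \emptyset\).

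For the forward direction, suppose \(\Omega\) is distinguished over \(k\). Then \(X_\Omega(k) \neq \emptyset\), so \(X_\Omega(k_v) \neq \emptyset\) for every place \(v\). Over \(k_v\), the orbit \(\Omega\) decomposes into \(\operatorname{Gal}(k_v)\)-orbits \(\Omega'_1, \dots, \Omega'_s\). A \(k_v\)-rational parabolic of type \(\Delta \setminus \Omega\) contains a minimal \(k_v\)-parabolic, which is of type \(\Delta_{0,v}\). Hence \(\Delta_{0,v} \subseteq \Delta \setminus \Omega\), so every \(\Omega'_j\) lies in the complement of \(\Delta_{0,v}\) and is distinguished over \(k_v\). (Alternatively, \(\mathbf{P}_{\Delta\setminus\Omega'_j} \supseteq \mathbf{P}_{\Delta\setminus\Omega}\) is then \(k_v\)-conjugate to a \(k_v\)-parabolic.)

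For the converse, suppose every \(\operatorname{Gal}(k_v)\)-orbit inside \(\Omega\) is distinguished at every \(v\). Then \(\Omega \cap \Delta_{0,v} = \emptyset\), so the standard parabolic \(\mathbf{P}_{\Delta \setminus \Omega}\) contains a conjugate of a minimal \(k_v\)-parabolic. Its type is \(\operatorname{Gal}(k_v)\)-stable, since \(\Omega\) is even \(\operatorname{Gal}(k)\)-stable, so \(X_\Omega(k_v) \neq \emptyset\) by Borel--Tits over \(k_v\). Harder's local-global principle for projective homogeneous varieties under simply connected groups over number fields then gives \(X_\Omega(k) \neq \emptyset\). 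By the criterion above, \(\Omega\) is distinguished.

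The main obstacle is applying Borel--Tits correctly over \(k_v\) when \(\mathbf{T} \times_k k_v\) need not contain a maximal \(k_v\)-split torus. I plan to handle this by phrasing everything via the \(\operatorname{Gal}(k_v)\)-stable types of parabolics: a type \(\Delta \setminus \Theta\) with \(\Theta\) stable is \(k_v\)-realizable if and only if \(\Theta\) avoids \(\Delta_{0,v}\). I also need to cite Harder's theorem precisely. It states that a projective homogeneous variety with points everywhere locally has a rational point, and it holds because \(H^1(k, \mathbf{P}) \to \prod_v H^1(k_v, \mathbf{P})\) is injective for parabolics, coming from the Hasse principle for the Levi quotient.
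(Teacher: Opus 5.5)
Your proposal is correct and follows the paper's proof: the \(k\)-variety \(\mathbf{X}_{\Omega}\) of parabolics of type \(\Delta\setminus\Omega\), Borel--Tits over \(k\) and over each \(k_v\) to turn rational points into distinguished orbits, and Harder's local-global principle for projective homogeneous varieties for the converse. Cite that principle directly, together with Chernousov's Hasse principle for the \(E_8\) case, instead of your heuristic justification. That justification is not right: injectivity of a map on \(H^1\) concerns uniqueness rather than existence of a rational point, and the actual argument combines the Hasse principle for the quasi-split inner form with lifting the local classes to a Levi subgroup.
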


\begin{proof}
    Given a subset \(I \subset \Delta\) that is invariant under the \(\operatorname{Gal}(k)\)-\(\ast\)-action on \(\Delta\), we let \(\mathbf{X}_I\) be the projective homogeneous variety of parabolic subgroups of \(\mathbf{G}\) that are conjugate to the standard parabolic subgroup of \(\mathbf{G}\) obtained by omitting \(I\) from \(\Delta\).  Since \(I\), hence also \(\Delta \setminus I\), is \(\ast\)-invariant, we have that \(\mathbf{X}_I\) is defined over \(k\) as explained in~\cite{Borel-Tits:reductifs}*{(I) in Section~6.4}.  Moreover, for every field extension \(L/k\), the variety \(\mathbf{X}_I\) has an \(L\)-rational point or, equivalently, one of the parabolic subgroups can be defined over \(L\), if and only if \(I\) consists of distinguished orbits in the Tits diagram of \(\mathbf{G}\) over \(L\)~\cite{Borel-Tits:reductifs}*{(I) in Section~6.3}.

  By the Harder local-global theorem for projective homogeneous varieties~\cite{Harder:bericht}*{Satz 4.3.3}, with the missing \(E_8\)-case later provided by Chernousov~\cite{Chernousov:hasse}, the variety \(\mathbf{X}_I\) has a \(k\)-rational point if and only if it has a \(k_v\)-rational point for every place \(v\) of \(k\).  Applying this principle to \(I = \Omega_i\) completes the proof.
\end{proof}

\begin{proposition} \label{prop:isomorphic-tits-indices}
Let \(\mathbf{G}\) be a \(k\)-group and let \(\mathbf{H}\) be an \(l\)-group.  Assume that there exist local isomorphisms \(\eta_v \colon l_v \rightarrow k_{v'}\) over a bijection \(v \mapsto v'\) of the places of \(l\) and \(k\), and corresponding isomorphisms \(\mathbf{G} \cong_{\eta_v} \mathbf{H}\).  Suppose that \(\mathbf{G}\) and \(\mathbf{H}\) are not of type \(D_{2n}\).  Then \(\mathbf{G}\) and \(\mathbf{H}\) have isomorphic Tits diagrams.
\end{proposition}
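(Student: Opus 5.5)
We would prove that both the orbit partitions and the distinguished orbits of \(\mathbf{G}\) and \(\mathbf{H}\) agree under a suitable identification of their Dynkin diagrams. First we identify the diagrams themselves. The local isomorphisms \(\mathbf{G} \cong_{\eta_v} \mathbf{H}\) show that \(\mathbf{G}\) and \(\mathbf{H}\) have the same absolute type. Outside type \(D_{2n}\) (this includes \(D_4\)), the automorphism group of the Dynkin diagram \(\Delta\) is either trivial or of order two. Hence the \(\ast\)-action of \(\operatorname{Gal}(k)\) either is trivial (inner forms) or factors through the quadratic extension \(K/k\) splitting the quasi-split inner form, acting by the unique nontrivial symmetry \(\tau\).

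\textbf{Step 1: the orbit partitions agree.} We have to show that \(\mathbf{G}\) is an inner form if and only if \(\mathbf{H}\) is. The \(\ast\)-action of \(\operatorname{Gal}(k)\) restricted to a decomposition group \(\operatorname{Gal}(k_v)\) is the \(\ast\)-action of \(\mathbf{G}\times_k k_v\). By Chebotarev, if \(K \ne k\), there is a positive density of places \(v\) inert in \(K\), and for those the local \(\ast\)-action is nontrivial. Conversely, if \(\mathbf{G}\) is inner, every local \(\ast\)-action is trivial. The isomorphisms over \(\eta_v\) transport local \(\ast\)-actions, because the quasi-split inner form and the field \(K\otimes k_v\) are intrinsic to the \(k_v\)-group. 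So the set of places with nontrivial local action corresponds under \(v \mapsto v'\), and \(\mathbf{G}\) is outer iff \(\mathbf{H}\) is. In either case the orbits on \(\Delta\) are the same: singletons, or the \(\tau\)-orbits. Here we use that the identification of \(\Delta_{\mathbf{G}}\) with \(\Delta_{\mathbf{H}}\) is well defined up to \(\tau\), and \(\tau\) preserves the \(\tau\)-orbit partition.

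\textbf{Step 2: the distinguished orbits agree.} By Proposition~\ref{prop:distinguished-orbits}, a global orbit \(\Omega\) is distinguished iff at every place \(v\) it consists of distinguished local orbits. Each isomorphism \(\mathbf{G}\times k_{v'} \cong \mathbf{H}\times l_v\) induces an isomorphism of local Tits indices. It therefore identifies the local sets of distinguished nodes via some diagram isomorphism, which lies in \(\{1,\tau\}\). This is the key point and the main obstacle: the local identifications may differ by \(\tau\) from place to place and from the global identification.

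We would handle it by proving that the local distinguished set \(\Delta\setminus\Delta_0(v)\) is \(\tau\)-invariant whenever \(\tau\) is a diagram automorphism outside \(D_{2n}\). For outer-type places this holds automatically, since the local \(\ast\)-action is \(\tau\) and distinguished sets are unions of \(\ast\)-orbits. For inner-type places we need the opposition involution \(i = -w_0\), which equals \(\tau\) exactly for \(A_n\), \(D_{2n+1}\) and \(E_6\), i.e.\ for every type with nontrivial \(\tau\) other than \(D_{2n}\). It is a standard fact (Borel--Tits) that \(\Delta_0\) is stable under the opposition involution, because opposite parabolics are conjugate and both defined over the field. Hence \(\Delta_0(v)\) is \(\tau\)-invariant at every place, and so is its complement. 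Thus the ambiguity by \(\tau\) is harmless, and for each global orbit \(\Omega\) the condition ``\(\Omega\) is locally distinguished at all places'' is equivalent for \(\mathbf{G}\) and \(\mathbf{H}\). Proposition~\ref{prop:distinguished-orbits} then gives the same distinguished orbits.

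This also explains why \(D_{2n}\) must be excluded: there \(-w_0 = 1\) while \(\tau \ne 1\), so \(\Delta_0(v)\) need not be \(\tau\)-stable.
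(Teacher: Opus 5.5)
Your proof is correct. Step 1 is the paper's argument in different words. The paper phrases it as ``the quasi-split inner form \(\mathbf{G_0}\) is \(k\)-split iff \(\mathbf{H_0}\) is \(l\)-split'', using that \(\mathbf{G_0}\) stays non-split at places inert in the quadratic splitting field; you phrase it through local \(\ast\)-actions and Chebotarev.

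The genuine difference is in Step 2. The paper reads off from Tits' classification over \(\R\) and \(\mathfrak{p}\)-adic fields that, outside type \(D_{2n}\), every local Tits diagram is invariant under the diagram symmetries, and then applies Proposition~\ref{prop:distinguished-orbits}. You derive this invariance structurally instead. \(\Delta_0\) is stable under the opposition involution \(\iota=-w_0\), because a minimal \(K\)-parabolic and its opposite with respect to a \(K\)-Levi are \(\mathbf{G}(K)\)-conjugate, while the opposite has type \(\iota(\Delta_0)\). Moreover \(\iota=\tau\) in types \(A_n\), \(D_{2n+1}\), \(E_6\).

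What each route buys:
\begin{itemize}
\item Your argument avoids the classification tables altogether and works over any field, not only local ones.
\item It also explains why \(D_{2n}\) is the exception: there \(\iota=1\ne\tau\).
\item It makes explicit a point the paper compresses into ``the same well-defined set of orbits'': local and global identifications of the Dynkin diagrams may differ by \(\tau\), and this is harmless.
\item The paper's citation is shorter and matches how it uses the classification later. For instance, Proposition~\ref{prop:only-n-and-n-1} needs to know exactly which asymmetric local diagrams occur and what constraints they impose, and no structural symmetry argument supplies that.
\end{itemize}
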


\begin{proof}
  Let \(\mathbf{G_0}\) and \(\mathbf{H_0}\) be the unique quasi-split inner forms of \(\mathbf{G}\) and \(\mathbf{H}\), respectively.  Let \(K\) be the splitting field of \(\mathbf{G_0}\), in other words the Galois extension \(K/k\) defined by the kernel of the \(\ast\)-action of \(\operatorname{Gal}(k)\) on the Dynkin diagram of \(\mathbf{G_0}\).  Since by assumption, \(\mathbf{G_0}\) does not have type \(D_4\), we have that \([K : k] = 1 \text{\,or } 2\).  In the latter case, \(\mathbf{G_0}\) is non-split and remains non-split at every place \(v\) of \(k\) that is inert in \(K\).

  By uniqueness of quasi-split forms, the family of isomorphisms \(\mathbf{G} \cong_{\eta_v} \mathbf{H}\) provides a family of isomorphisms \(\mathbf{G_0} \cong_{\eta_v} \mathbf{H_0}\).  We conclude that \(\mathbf{G_0}\) is \(k\)-split if and only if \(\mathbf{H_0}\) is \(l\)-split.  Note that since type \(D_4\) is excluded, there are at most two different Tits diagrams of quasi-split groups: the split one, where each node forms a distinguished orbit, and in type \(A_{n \ge 2}\), \(D_{n \ge 5}\), and \(E_6\) an additional one where all orbits of the unique non-trivial symmetry are distinguished.  We conclude that \(\mathbf{G_0}\) and \(\mathbf{H_0}\) have isomorphic global Tits diagrams.

  The classification in~\cite{Tits:classification} reveals that over real and \(\mathfrak{p}\)-adic fields and outside type \(D_{2n}\), all possible Tits diagrams are invariant under the symmetries of the underlying Dynkin diagram.  Therefore, for each place \(v\) of \(l\), the isomorphism \(\mathbf{G} \cong_{\eta_v} \mathbf{H}\) shows that the same well-defined set of orbits is distinguished in the Tits diagrams of \(\mathbf{G}\) over \(k_{v'}\) and \(\mathbf{H}\) over \(l_v\).  In view of Proposition~\ref{prop:distinguished-orbits}, the proof is complete.  
\end{proof}

\begin{corollary} \label{cor:rank}
  Under the assumptions of Proposition~\ref{prop:isomorphic-tits-indices}, we have that
  \[ \operatorname{rank}_k \mathbf{G} = \operatorname{rank}_l \mathbf{H}. \]
\end{corollary}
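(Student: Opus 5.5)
The plan is to read off the rank directly from the Tits diagram, which Proposition~\ref{prop:isomorphic-tits-indices} already identifies for \(\mathbf{G}\) and \(\mathbf{H}\). Recall the standard fact from~\cite{Borel-Tits:reductifs} (see also~\cite{Tits:classification}) that for a \(K\)-group \(\mathbf{G}\) with Tits index \((\Delta, \ast, \Delta_0)\), the relative root system \({}_K\Phi\) has a basis \({}_K\Delta\) given by the restrictions to \(\mathbf{S}\) of the simple roots in \(\Delta \setminus \Delta_0\). Two simple roots in \(\Delta \setminus \Delta_0\) have the same restriction if and only if they lie in the same \(\operatorname{Gal}(K)\)-orbit of the \(\ast\)-action. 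Consequently,
\[ \operatorname{rank}_K \mathbf{G} = \dim \mathbf{S} = |{}_K\Delta| = \#\{\text{distinguished orbits in the Tits diagram of } \mathbf{G}\}. \]
Note that this number depends only on the Tits diagram in the sense defined above: it is the orbit partition together with the marking of which orbits are distinguished, and it does not involve the Galois action itself.

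The key steps are therefore as follows. First, verify or cite the identification \(\operatorname{rank}_K \mathbf{G} = |{}_K\Delta|\) together with the bijection between \({}_K\Delta\) and the distinguished orbits. This is \cite{Borel-Tits:reductifs}*{Section~6} and is compatible with the parabolic viewpoint used in the paper. A maximal \(K\)-parabolic subgroup corresponds to \(\Delta\) minus a single distinguished orbit, and the minimal \(K\)-parabolic subgroup \(\mathbf{P}_{\Delta_0}\) has semisimple \(K\)-rank equal to the number of distinguished orbits removed. Second, apply Proposition~\ref{prop:isomorphic-tits-indices}: under its hypotheses there is an isomorphism of Dynkin diagrams carrying the orbits of \(\mathbf{G}\) bijectively onto the orbits of \(\mathbf{H}\) and preserving distinguishedness. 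Hence the two counts agree, and \(\operatorname{rank}_k \mathbf{G} = \operatorname{rank}_l \mathbf{H}\).

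The only point needing care is the first step, namely that distinct \(\ast\)-orbits outside \(\Delta_0\) give distinct, linearly independent relative simple roots, so that no two orbits collapse to the same restriction. I would settle this by citing Borel--Tits directly rather than reproving it. Alternatively, one can argue through the parabolic description: the conjugacy classes of maximal proper \(K\)-parabolic subgroups containing a fixed minimal one correspond bijectively to the distinguished orbits. Their number equals \(\operatorname{rank}_K \mathbf{G}\), since the standard \(K\)-parabolic subgroups correspond to the subsets of \({}_K\Delta\).
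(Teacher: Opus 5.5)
Your proposal is correct and follows the paper's argument: the paper's proof is the single observation that the rank equals the number of distinguished orbits, combined with the isomorphism of Tits diagrams from Proposition~\ref{prop:isomorphic-tits-indices}, and you add the Borel--Tits justification for that observation. One harmless slip is in your aside: the Levi subgroup of the minimal $K$-parabolic $\mathbf{P}_{\Delta_0}$ (the centralizer of $\mathbf{S}$) has semisimple $K$-rank zero, so the quantity that equals the number of removed distinguished orbits is $\dim \mathbf{S}$, the dimension of its maximal central $K$-split torus, not a semisimple rank.
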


\begin{proof}
The rank equals the number of distinguished orbits in the Tits diagram.
\end{proof}

It turns out that the existence of asymmetric Tits diagrams in type \(D_{2n}\) precludes its inclusion in the corollary.

\begin{proposition} \label{prop:differing-ranks}
  For every number field \(k\) and every \(n \ge 2\), there exist \(k\)-groups \(\mathbf{G}\) and \(\mathbf{H}\) of type \(D_{2n}\) with \(\mathbf{G} \cong_{k_v} \mathbf{H}\) for all places \(v\) of \(k\) but
  \[ \operatorname{rank}_k \mathbf{G} = n, \qquad \operatorname{rank}_k \mathbf{H} = n - 1.\]
\end{proposition}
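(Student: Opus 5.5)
We construct \(\mathbf{G}\) and \(\mathbf{H}\) as spinor groups of \(4n\)-dimensional quadratic forms of trivial discriminant (type \({}^1D_{2n}\)), or more flexibly as inner forms of the split group \(\operatorname{Spin}_{4n}\), obtained by twisting by a central simple algebra with orthogonal involution. The relevant invariant is the Tits class in \(\operatorname{Br}(k)\), together with the asymmetry of the two end nodes \(\alpha_{2n-1}, \alpha_{2n}\) of the \(D_{2n}\) diagram. For an inner form of \(\operatorname{Spin}_{4n}\), the center is \(\mu_2 \times \mu_2\) and the Tits algebras attached to the two half-spin representations are classes \(\beta_+, \beta_- \in \operatorname{Br}_2(k)\). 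Write \(\beta_0\) for the class of the algebra carrying the involution; then \(\beta_0 = \beta_+ + \beta_-\). Swapping the roles of \(\beta_+\) and \(\beta_-\) corresponds to applying the outer diagram symmetry. Two such forms \(\mathbf{G}, \mathbf{H}\) with Tits classes \((\beta_+, \beta_-)\) and \((\beta_-, \beta_+)\) are abstractly isomorphic as algebraic groups (via an outer automorphism of the quasi-split form), but not as inner twists. This gives \(\mathbf{G} \cong \mathbf{H}\) over every \(k_v\) provided the local data match up to the swap at each place; in fact we will arrange the two to be globally non-isomorphic only via the numbering of the Tits index. The first caution is that abstract isomorphism over \(k\) would force equal ranks. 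So the construction must instead produce groups that are locally isomorphic but globally differ, and the plan is modified as follows.

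At each place \(v\), the isomorphism class of an inner form of \(\operatorname{Spin}_{4n}\) is determined by a class in \(H^1(k_v, \operatorname{PSO}_{4n})\). By Hasse principle arguments (Kneser, Harder), global classes are determined by the local ones together with the real data, up to the failure controlled by \(\operatorname{Sha}^1\) of the adjoint group. Since the group of diagram symmetries acts on \(H^1(k, \mathbf{G}_{ad})\) and outer twisting does not change the abstract group, the real freedom is the following. Globally, choose \(\beta_+\) split and \(\beta_-=\beta_0\) a quaternion class \(Q\) over \(k\) ramified at exactly two places \(v_1, v_2\). Locally at \(v_1\) and \(v_2\) the pair is \((0,Q_v)\); this is isomorphic, via the outer automorphism, to the pair \((Q_v,0)\). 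The other group \(\mathbf{H}\) is built with \(\beta_+ = 0\) at \(v_1\) and \(\beta_+ = Q\) at \(v_2\). The product formula for Brauer classes forbids a global class ramified at only one place, so this naive version fails. Instead we use two quaternion algebras \(Q, Q'\) with disjoint pairs of ramified places, and set \(\mathbf{G}\) with Tits pair \((Q\otimes Q', 0)\)-type data and \(\mathbf{H}\) with \((Q, Q')\). At each place \(v\), at most one of \(Q_v, Q'_v\) is nonsplit, so up to swapping the two coordinates the local pairs agree, and hence \(\mathbf{G} \cong_{k_v} \mathbf{H}\).

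The ranks then come from the concrete models. We take \(\mathbf{G} = \operatorname{Spin}(h)\) for a hermitian form \(h\) over the biquaternion or quaternion algebra \(D\) representing \(\beta_0\). The obstruction to a half-spin end node being distinguished is exactly non-splitness of the corresponding Tits algebra (Tits' classification/Merkurjev's index reduction). Hence the Tits index of \(\mathbf{G}\) has distinguished orbits \(\alpha_2, \alpha_4, \ldots, \alpha_{2n-2}\) and one end node, giving rank \(n\). In \(\mathbf{H}\) both end nodes are non-distinguished, giving rank \(n-1\), with the even nodes \(\alpha_2, \ldots, \alpha_{2n-2}\) distinguished via a hyperbolic skew-hermitian form over a quaternion algebra. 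To realize these global indices we choose hyperbolic forms of maximal Witt index over the appropriate algebra. We check the local indices against Tits' \(\mathfrak{p}\)-adic and real tables, and use Proposition~\ref{prop:distinguished-orbits} to confirm the global distinguished orbits.

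\textbf{Main obstacle.} The hardest step is producing an honest local isomorphism \(\mathbf{G} \cong_{k_v} \mathbf{H}\) at every place, including archimedean ones, while the global Brauer data genuinely differ. The global Brauer data are constrained by reciprocity, so I would first fix the Tits-class pairs, then invoke the surjectivity \(H^1(k,\mathbf{G}_{ad}) \to \prod_v H^1(k_v,\mathbf{G}_{ad})\) onto families satisfying the Brauer reciprocity condition, due to Prasad–Rapinchuk and Borovoi, to obtain global forms with prescribed local classes up to swap. It must also be verified that the chosen local forms have the required Witt indices so that the ranks come out as \(n\) and \(n-1\).
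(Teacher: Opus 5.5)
After you discard the two-place attempt, your construction is exactly the paper's: the Tits-class pairs $([Q]+[Q'],0)$ and $([Q],[Q'])$ are the local $\delta^1$-images $(1,0)$ at $v_1,\dots,v_4$ for one group, and $(1,0)$ at $v_1,v_2$ with $(0,1)$ at $v_3,v_4$ for the other. As in the paper, these are realized globally by the reciprocity (Poitou--Tate) criterion, matched locally via the outer swap, and give ranks $n$ and $n-1$ by Proposition~\ref{prop:distinguished-orbits}.

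Two points need tightening. First, Tits classes determine the local form only at finite places, so you must prescribe the full local classes, e.g.\ the trivial class at every archimedean place. Second, drop the claim that a split Tits algebra is \emph{exactly} the condition for an end node to be distinguished. Globally it is only necessary, so the count for $\mathbf{G}$ must come from the local--global principle.
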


The proof needs some preparation.  To begin, we recall some standard concepts from Galois cohomology as outlined in~\cite{Serre:galois-cohomology}*{Chapter~I,~\S5 and Chapter~III}.  Let \(K\) be again a field of characteristic zero and fix an algebraic closure \(\overline{K}\).  For a \(\operatorname{Gal}(K)\)-group \(A\), the first non-commutative Galois cohomology set with values in \(A\) is denoted by \(H^1(K, A)\).  If \(A = \operatorname{Aut} \mathbf{G}\) for a \(K\)-group \(\mathbf{G}\) and \([a] \in H^1(K, \operatorname{Aut} \mathbf{G})\), then \({}_a \mathbf{G}\) denotes the \(K\)-form of \(\mathbf{G}\) obtained from \(\mathbf{G}\) by twisting with the 1-cocycle \(a\).

For what follows, it is important that we specify precisely what we mean when we refer to an \emph{inner \(K\)-form} of \(\mathbf{G}\).  An inner \(K\)-form of \(\mathbf{G}\) is often just defined as a \(K\)-group \(\mathbf{H}\) that is \(K\)-isomorphic to \({}_a \mathbf{G}\) for some 1-cocycle \(a\) with values in \((\operatorname{Ad} \mathbf{G})(\overline{K})\).  Here \(\operatorname{Ad} \mathbf{G}\) is the \emph{adjoint form} of \(\mathbf{G}\), the subgroup of inner automorphisms in \(\operatorname{Aut} \mathbf{G}\).  With this definition, \(K\)-isomorphism classes of inner \(K\)-forms are classified by the image of \(H^1(K, \operatorname{Ad} \mathbf{G})\) in \(H^1(K, \operatorname{Aut} \mathbf{G})\).  Instead, we adopt the definition of Milne~\cite{Milne:reductive-groups}*{6.35} according to which an \emph{inner \(K\)-form} of \(\mathbf{G}\) is a pair \((\mathbf{H}, f)\) consisting of a \(K\)-group \(\mathbf{H}\) and a \(\overline{K}\)-isomorphism \(f \colon \mathbf{G} \times_K \overline{K} \rightarrow \mathbf{H} \times_K \overline{K}\) such that the corresponding 1-cocycle \(a_\sigma = f^{-1} \circ {}^\sigma f\) has values in \((\operatorname{Ad} \mathbf{G})(\overline{K})\) for all \(\sigma \in \operatorname{Gal}(K)\).  Two inner \(K\)-forms \((\mathbf{H_1}, f_1)\) and \((\mathbf{H_2}, f_2)\) are called \emph{equivalent} if there exists a \(K\)-isomorphism \(\varphi \colon \mathbf{H_1} \rightarrow \mathbf{H_2}\) such that \(f_2^{-1} \circ \varphi \circ f_1 \in (\operatorname{Ad} \mathbf{G})(\overline{K})\).  With this definition, equivalence classes of inner \(K\)-forms are classified by the Galois cohomology set \(H^1(K, \operatorname{Ad} \mathbf{G})\) itself.

Assume now that \(\mathbf{G}\) is \(K\)-quasi-split, pick a maximal \(K\)-torus \(\mathbf{T}\) and let \(\Delta\) be a system of simple roots with respect to \(\mathbf{T}\).  These choices yield a short exact sequence
\begin{equation} \label{eq:ses-ad-aut-sym} 1 \rightarrow \operatorname{Ad} \mathbf{G} \longrightarrow \operatorname{Aut} \mathbf{G} \longrightarrow \operatorname{Sym} \Delta \rightarrow 1 \end{equation}
where \(\operatorname{Sym} \Delta\) is the \(K\)-group scheme of Dynkin diagram symmetries.  It is a constant group scheme unless \(\mathbf{G}\) is a non-split form of type \(D_4\).  The second map has the description that a \(\overline{K}\)-automorphism of \(\mathbf{G}\) permutes the \(\overline{K}\)-conjugacy classes of maximal parabolic subgroups of \(\mathbf{G}\).

Given an inner \(K\)-form \((\mathbf{H}, f)\) of \(\mathbf{G}\), we say that a \(\operatorname{Gal}(K)\)-\(\ast\)-orbit \(\Omega \subset \Delta\) is \emph{distinguished in \(\mathbf{H}\)} if the \(\overline{K}\)-conjugacy class of \(f(\mathbf{P}_{\Delta \setminus \Omega})\) in \(\mathbf{H}\) contains a representative defined over \(K\).  By construction, this property is well-defined on equivalence classes of inner forms.  Note that since \(\mathbf{G}\) is \(K\)-quasi-split, every orbit \(\Omega \subset \Delta\) is distinguished in \((\mathbf{G}, \operatorname{id}_{\mathbf{G} \times_K \overline{K}})\) itself.  Hence given \([a] \in H^1(K, \operatorname{Ad} \mathbf{G})\), there exists a well-defined way to drop circles from the \(\ast\)-orbits of the Tits diagram of \(\mathbf{G}\) to obtain the \emph{Tits diagram of \({}_a \mathbf{G}\) based on the Tits diagram of \(\mathbf{G}\)}.

\begin{proposition} \label{prop:swapping-tits}
Let \(\mathbf{G}\) be a \(K\)-quasi-split \(K\)-group.  Assume there exist two different classes \([a^1], [a^2] \in H^1(K, \operatorname{Ad} \mathbf{G})\) which become equal in \(H^1(K, \operatorname{Aut} \mathbf{G})\).  Then the Tits diagrams of \({}_{a^1} \mathbf{G}\) and \({}_{a^2} \mathbf{G}\) based on the Tits diagram of \(\mathbf{G}\) match after applying a nontrivial symmetry.
\end{proposition}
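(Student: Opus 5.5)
Since \([a^1]\) and \([a^2]\) have the same image in \(H^1(K, \operatorname{Aut} \mathbf{G})\), there is some \(g \in (\operatorname{Aut} \mathbf{G})(\overline{K})\) with
\[ a^2_\sigma = g^{-1} a^1_\sigma\, {}^\sigma g \quad \text{for all } \sigma \in \operatorname{Gal}(K). \]
We write \(\bar g \in \operatorname{Sym} \Delta\) for the image of \(g\) under the second map of~\eqref{eq:ses-ad-aut-sym}. The plan is to show that \(\bar g\) is the nontrivial symmetry in the statement, meaning that it carries the distinguished orbits of one based Tits diagram onto those of the other.

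The key steps are as follows.

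\emph{First}, realize both twists concretely as inner forms. Choose \(\overline{K}\)-isomorphisms \(f_i \colon \mathbf{G} \times_K \overline{K} \to {}_{a^i}\mathbf{G} \times_K \overline{K}\) with \(f_i^{-1} \circ {}^\sigma f_i = a^i_\sigma\). Then \(\varphi := f_1 \circ g \circ f_2^{-1}\) is Galois invariant:
\[ {}^\sigma\varphi = f_1 a^1_\sigma\, {}^\sigma g\, (a^2_\sigma)^{-1} f_2^{-1} = f_1 g f_2^{-1} = \varphi. \]
Hence \(\varphi\) is a \(K\)-isomorphism \({}_{a^2}\mathbf{G} \to {}_{a^1}\mathbf{G}\).

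\emph{Second}, track parabolics. A \(K\)-isomorphism maps \(K\)-defined parabolic subgroups to \(K\)-defined ones, and it maps \(\overline{K}\)-conjugacy classes to \(\overline{K}\)-conjugacy classes. We have \(\varphi(f_2(\mathbf{P}_{\Delta\setminus\Omega})) = f_1(g(\mathbf{P}_{\Delta\setminus\Omega}))\). By the description of the map \(\operatorname{Aut}\mathbf{G} \to \operatorname{Sym}\Delta\), the subgroup \(g(\mathbf{P}_{\Delta\setminus\Omega})\) is \(\overline{K}\)-conjugate to \(\mathbf{P}_{\Delta\setminus\bar g(\Omega)}\). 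It follows that \(\Omega\) is distinguished in \({}_{a^2}\mathbf{G}\) if and only if \(\bar g(\Omega)\) is distinguished in \({}_{a^1}\mathbf{G}\).

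Two side checks are needed here.

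\begin{enumerate}
\item \(\bar g(\Omega)\) must again be a \(\ast\)-orbit. Because \(a^1\) and \(a^2\) take values in \(\operatorname{Ad}\mathbf{G}\), the cocycle identity projects to \(\bar g = {}^\sigma \bar g\) in \(\operatorname{Sym}\Delta\). So \(\bar g\) commutes with the \(\ast\)-action and permutes the orbits.
\item The notion of distinguished must be independent of the choice of \(f_i\) within its class. This holds because it is well defined on equivalence classes, as noted before the statement.
\end{enumerate}

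\emph{Third}, show that \(\bar g\) is nontrivial. Suppose instead \(\bar g = 1\). Then \(g \in (\operatorname{Ad}\mathbf{G})(\overline{K})\), and the relation above says exactly that \(a^1\) and \(a^2\) are cohomologous in \(H^1(K,\operatorname{Ad}\mathbf{G})\). This contradicts the assumption that the classes are different. Hence \(\bar g\) is a nontrivial symmetry carrying the based Tits diagram of \({}_{a^2}\mathbf{G}\) onto that of \({}_{a^1}\mathbf{G}\).

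The main obstacle I expect is the bookkeeping in the second step. One has to verify carefully, with the conventions for cocycles and for composing \(g\) with \(f_i\), that the direction of \(\bar g\) is consistent. One also has to check that the statement about conjugacy classes of parabolics under an outer automorphism is really the defining property of \(\operatorname{Aut}\mathbf{G}\to\operatorname{Sym}\Delta\). This needs care in the non-split \(D_4\) case, where \(\operatorname{Sym}\Delta\) is not constant and Galois-invariance of \(\bar g\) is precisely what is used.
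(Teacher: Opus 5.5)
Your proposal is correct and follows essentially the same route as the paper. In both, a \(K\)-isomorphism between the two twists is pulled back via the \(f_i\) to a \(\overline{K}\)-automorphism of \(\mathbf{G}\); its diagram symmetry is nontrivial because the classes differ in \(H^1(K, \operatorname{Ad} \mathbf{G})\), and \(K\)-defined parabolics are transported along it. You simply make the cocycle bookkeeping explicit (\(\varphi = f_1 \circ g \circ f_2^{-1}\), with \(g\) the element realizing the coboundary) and add the check that \(\bar g\) commutes with the \(\ast\)-action, which the paper leaves implicit.
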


\begin{proof}
  The classes \([a^1]\) and \([a^2]\) correspond to equivalence classes of inner forms \(({}_{a^1} \mathbf{G}, f_1)\) and \(({}_{a^2} \mathbf{G}, f_2)\), respectively.  Since \([a^1]\) and \([a^2]\) become equal in \(H^1(K, \operatorname{Aut} \mathbf{G})\), there exists a \(K\)-isomorphism \(\varphi \colon {}_{a^1} \mathbf{G} \rightarrow {}_{a^2} \mathbf{G}\).

  The composition \(f_2^{-1} \circ \varphi \circ f_1\) is a \(\overline{K}\)-automorphism of \(\mathbf{G}\).  We claim that the corresponding Dynkin diagram symmetry \(s\) is non-trivial and sends roots distinguished in \({}_{a^1} \mathbf{G}\) to roots distinguished in \({}_{a^2} \mathbf{G}\).

Indeed, since \([a^1]\) and \([a^2]\) are different classes in \(H^1(K, \operatorname{Ad} \mathbf{G})\), the automorphism \(f_2^{-1} \circ \varphi \circ f_1\) is not inner, hence \(s\) is non-trivial by the short exact sequence~\eqref{eq:ses-ad-aut-sym}.  Now suppose that \(\Omega_1 \subset \Delta\) is distinguished in \({}_{a^1} \mathbf{G}\).  Then \(f_1(\mathbf{P}_{\Delta \setminus \Omega_1})\) is \(\overline{K}\)-conjugate to a parabolic subgroup \(\mathbf{P} \le {}_{a^1} \mathbf{G}\) defined over \(K\).  Since \(\varphi\) is a \(K\)-isomorphism, we have that \(\varphi(\mathbf{P}) \le {}_{a^2} \mathbf{G}\) is defined over \(K\) as well.  Thus \(f_2^{-1}(\varphi(\mathbf{P}))\) corresponds to an orbit \(\Omega_2 \subset \Delta\) distinguished in \({}_{a^2} \mathbf{G}\).  Moreover \(s(\Omega_1) = \Omega_2\) because the correspondence depends only on \(\overline{K}\)-conjugacy classes of parabolics.  Clearly, we obtain the inverse of \(s\) by interchanging the roles of \([a^1]\) and \([a^2]\) so \(s\) defines a bijection of the sets of distinguished orbits.
\end{proof}

We remark that all Tits diagrams of outer \(K\)-groups are invariant under all diagram symmetries commuting with the \(\ast\)-action so that Proposition~\ref{prop:swapping-tits} is only interesting if \(\mathbf{G}\) is \(K\)-split.  But the proof does not become any easier under this assumption, so we allow \(\mathbf{G}\) more generally to be \(K\)-quasi-split.

If in the last proposition, \(K\) is a \(\mathfrak{p}\)-adic field, then it follows from the description of the \((\operatorname{Sym} \Delta)(K)\)-action on \(H^1(K, \operatorname{Ad} \mathbf{G})\) given in~\cite{Kneser:galois}*{p.\,254} that such different classes \([a^1], [a^2] \in H^1(K, \operatorname{Ad} \mathbf{G})\) exist if and only if \(\mathbf{G}\) has type \({}^1 A_{n \ge 2}\), \({}^1 D_{n \ge 4}\), or \({}^1 E_6\).  If \(K = \R\), however, such classes only exist in type \({}^1 D_{2n}\) for \(n \ge 2\) as can be inferred from \cite{Adams-Taibi:galois}*{Section~10.3}.  More precisely, for \(n \ge 3\), there are two different inner forms of the real group \(\operatorname{Spin}(2n,2n)\), both isomorphic to \(\operatorname{Spin}^*(4n)\).  According to Proposition~\ref{prop:swapping-tits}, the Tits diagrams of these two inner forms based on the Tits diagram of \(\mathbf{G} = \operatorname{Spin}(2n,2n)\) are given by
\begin{equation} \label{eq:asymmetric-tits-diagrams}
\begin{tikzpicture}[scale=.8, transform shape, baseline=-0.5ex, line width=0.5pt]

  \newcommand{\sofourstarindex}{
    \coordinate (a1) at (0,0);
    \coordinate (a2) at (0.9,0);
    \coordinate (a3) at (1.8,0);

    \coordinate (c1) at (2.1,0);
    \coordinate (c2) at (2.75,0);

    \coordinate (b0) at (3.05,0);   
    \coordinate (b1) at (3.95,0);
    \coordinate (b2) at (4.85,0);
    \coordinate (b3) at (5.65,0.5);
    \coordinate (b4) at (5.65,-0.5);

    \draw (a1) -- (a2) -- (a3) -- (c1);
    \draw (c2) -- (b0) -- (b1) -- (b2);
    \draw (b2) -- (b3);
    \draw (b2) -- (b4);

    \node at (2.42,0) {$\cdots$};

    \foreach \v in {a1,a2,a3,b0,b1,b2,b3,b4}
      \fill (\v) circle[radius=1.4pt];

    \draw (a2) circle[radius=4.2pt];
    \draw (b0) circle[radius=4.2pt];
    \draw (b2) circle[radius=4.2pt];
    \draw (b4) circle[radius=4.2pt];
  }

  \begin{scope}
    \sofourstarindex
  \end{scope}
  
  \node at (6.75,0) {\large and};

  \begin{scope}[shift={(7.85,0)}, yscale=-1]
    \sofourstarindex
  \end{scope}
\end{tikzpicture}
\end{equation}
For \(n = 2\), there is another such form where precisely the left and the center node of the type \(D_4\) diagram are distinguished, explaining the accidental isomorphism \(\operatorname{Spin}(6,2) \cong \operatorname{Spin}^*(8)\).  Forms with these diagrams also exist over \(\mathfrak{p}\)-adic fields but these are the only Tits diagrams throughout all inner types that are not invariant under all diagram symmetries.  This special feature of type \({}^1 D_{2n}\) entails the contrasting behavior described in Proposition~\ref{prop:differing-ranks} as we now explain.

\begin{proof}[Proof of Proposition~\ref{prop:differing-ranks}.]
  Let \(\mathbf{G_0} = \operatorname{Spin}(2n,2n)\) be the unique \(k\)-split \(k\)-form of type \(D_{2n}\) and pick any four finite places \(v_1, v_2, v_3, v_4\) of \(k\).  We claim that there exists a class \(\alpha \in H^1(k, \operatorname{Ad} \mathbf{G_0})\) that localizes to a class with the first of the above pictured Tits diagrams based on the Tits diagram of \(\mathbf{G_0}\) at \(v_1\), \(v_2\), \(v_3\), and \(v_4\) and splits at all other places.  Similarly, there exists another class \(\beta \in H^1(k, \operatorname{Ad} \mathbf{G_0})\) that localizes to a class with the first diagram at \(v_1\) and \(v_2\), to a class with the second diagram at \(v_3\) and \(v_4\), and splits at all other places of \(k\).  Indeed, a necessary and sufficient condition for the realization of a family of local inner forms by a global inner form can be formulated by means of Poitou--Tate duality.  This is well-known to the experts and can for instance be inferred from~\cite{Prasad-Rapinchuk:prescribed}.  For a precise statement of the criterion we apply here, we refer however to \cite{Kammeyer:galois-cohomology}*{Theorem~IV.7.2, p.\,36} together with the description of the local Poitou--Tate maps in type \({}^1 D_{2n}\) given in~\cite{Baumann-Kammeyer:congruence-rigidity}*{Table~2, p.\,17}.  Under the isomorphism
  \[ H^1(k_{v_i}, \operatorname{Ad} \mathbf{G_0}) \xrightarrow{ \ \delta^1 \ } H^2(k_{v_i}, Z(\mathbf{G_0})) \cong \Z/2 \times \Z/2, \]
  the inner forms with the two Tits diagrams from above correspond to \((1,0)\) and \((0,1)\), respectively, and the split class corresponds to \((0,0)\).  The necessary and sufficient condition for realization by a global class is that all local classes sum up to \((0,0)\) which is granted in our construction.

    Let \(\mathbf{G}\) and \(\mathbf{H}\) be the \(k\)-groups corresponding to the images of \(\alpha\) and \(\beta\) in \(H^1(k, \operatorname{Aut} \mathbf{G_0})\), respectively.  By Proposition~\ref{prop:distinguished-orbits}, the Tits diagram of \(\mathbf{G}\) has \(n\) distinguished nodes while the Tits diagram of \(\mathbf{H}\) has \(n-1\) distinguished nodes.
\end{proof}

Next, we verify that \(n\) and \(n-1\) are the only distinct values that can occur as global rank for two locally isomorphic groups of type \(D_{2n}\) for \(n \ge 3\).

\begin{proposition} \label{prop:only-n-and-n-1}
Let \(\mathbf{G}\) be a \(k\)-group and let \(\mathbf{H}\) be an \(l\)-group, both of type \(D_{2n}\) for \(n \ge 3\).  Assume that there exist local isomorphisms \(\eta_v \colon l_v \rightarrow k_{v'}\) over a bijection \(v \mapsto v'\) of the places of \(l\) and \(k\), and corresponding isomorphisms \(\mathbf{G} \cong_{\eta_v} \mathbf{H}\).  If \(\operatorname{rank}_k \mathbf{G} > \operatorname{rank}_l \mathbf{H}\), then \(\operatorname{rank}_k \mathbf{G} = n\) and \(\operatorname {rank}_l \mathbf{H} = n-1\).
\end{proposition}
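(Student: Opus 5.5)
We first dispose of the outer case. If the quasi-split inner form \(\mathbf{G_0}\) of \(\mathbf{G}\) is not split, then, as in the proof of Proposition~\ref{prop:isomorphic-tits-indices}, the same holds for \(\mathbf{H_0}\); for \(n \ge 3\) the quasi-split diagrams are then isomorphic. By the classification in~\cite{Tits:classification}, every local Tits diagram of an outer form is invariant under the diagram symmetry. The argument of Proposition~\ref{prop:isomorphic-tits-indices} then goes through verbatim and gives \(\operatorname{rank}_k \mathbf{G} = \operatorname{rank}_l \mathbf{H}\). So we may assume both groups are inner forms of type \({}^1 D_{2n}\). Then all \(\ast\)-orbits are single nodes, and \(\operatorname{Sym}\Delta = \Z/2\) is generated by the symmetry \(s\) exchanging the end nodes \(\alpha_{2n-1}\) and \(\alpha_{2n}\), since \(n \ge 3\) excludes triality.

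Let \(D_{\mathbf{G}}\) and \(D_{\mathbf{H}}\) be the sets of globally distinguished nodes, and \(S_v\) the distinguished set of \(\mathbf{G}\) over \(k_{v'}\). By Proposition~\ref{prop:distinguished-orbits}, \(D_{\mathbf{G}} = \bigcap_v S_v\). The local isomorphism \(\mathbf{G} \cong_{\eta_v} \mathbf{H}\) shows that the distinguished set of \(\mathbf{H}\) over \(l_v\) is \(s^{\varepsilon_v}(S_v)\) for some \(\varepsilon_v \in \{0,1\}\), so \(D_{\mathbf{H}} = \bigcap_v s^{\varepsilon_v}(S_v)\). Since \(s\) fixes every non-end node, \(D_{\mathbf{G}}\) and \(D_{\mathbf{H}}\) contain exactly the same non-end nodes. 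Moreover, if \(D_{\mathbf{G}}\) contained both end nodes, every \(S_v\) would contain both and hence be \(s\)-invariant, which forces \(D_{\mathbf{H}} \supseteq D_{\mathbf{G}}\).

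Hence \(\operatorname{rank}_k \mathbf{G} > \operatorname{rank}_l \mathbf{H}\) forces the following: \(D_{\mathbf{G}}\) contains exactly one end node, say \(\alpha_{2n}\); \(D_{\mathbf{H}}\) contains no end node; and \(D_{\mathbf{G}} = D_{\mathbf{H}} \sqcup \{\alpha_{2n}\}\). In particular \(\operatorname{rank}_k \mathbf{G} = \operatorname{rank}_l \mathbf{H} + 1\).

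It remains to show \(\operatorname{rank}_k \mathbf{G} = n\), which is the step needing real input from the classification. Since \(D_{\mathbf{G}} \not\ni \alpha_{2n-1}\) but \(D_{\mathbf{G}} \ni \alpha_{2n}\), the global index of \(\mathbf{G}\) is asymmetric. By the list of admissible indices of type \({}^1 D_{2n}\) in~\cite{Tits:classification}, the only asymmetric ones are \({}^1 D^{(2)}_{2n,n}\), with distinguished nodes \(\alpha_2, \alpha_4, \dots, \alpha_{2n-2}, \alpha_{2n}\) (up to \(s\)). Thus \(|D_{\mathbf{G}}| = n\) and consequently \(|D_{\mathbf{H}}| = n-1\).

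If one prefers to avoid appealing directly to the global list, the alternative is a local argument. Some place \(v\) must have \(\alpha_{2n} \in S_v\) and \(\alpha_{2n-1} \notin S_v\), so \(S_v\) is one of the asymmetric local diagrams~\eqref{eq:asymmetric-tits-diagrams}; this gives the upper bound \(D_{\mathbf{G}} \subseteq \{\alpha_2, \dots, \alpha_{2n-2}, \alpha_{2n}\}\). For equality one still needs that the anisotropic kernel \(\Delta \setminus D_{\mathbf{G}}\) must be a union of components that are admissible anisotropic inner indices. The component containing \(\alpha_{2n-1}\) cannot have \(\alpha_{2n-2}\) undistinguished, and inductively this forces all even nodes into \(D_{\mathbf{G}}\). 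I expect this verification against Tits' classification to be the main (though routine) obstacle.
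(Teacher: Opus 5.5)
Your argument is correct up to the final step. That includes the separate treatment of the outer case, which the paper leaves implicit, and the bookkeeping \(D_{\mathbf{G}} = \bigcap_v S_v\), \(D_{\mathbf{H}} = \bigcap_v s^{\varepsilon_v}(S_v)\), which gives \(D_{\mathbf{G}} = D_{\mathbf{H}} \sqcup \{\alpha_{2n}\}\). (One minor point: a globally outer form can have asymmetric local diagrams at places that split its quadratic splitting field, so ``every local diagram is invariant'' is not literally true. Rank equality still holds because the global end-node orbit \(\{\alpha_{2n-1}, \alpha_{2n}\}\) is \(s\)-stable.)

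The gap is the step \(|D_{\mathbf{G}}| = n\). You claim the only asymmetric admissible index of type \({}^1D_{2n}\) is \({}^1D^{(2)}_{2n,n}\), but Tits' list also contains the asymmetric indices \({}^1D^{(d)}_{2n,2n/d}\) for every \(2\)-power \(d \ge 4\) dividing \(2n\). Their distinguished nodes are \(\alpha_d, \alpha_{2d}, \dots, \alpha_{2n-d}, \alpha_{2n}\); for example, \(\{\alpha_4, \alpha_8\}\) in type \(D_8\) comes from a hyperbolic involution on an algebra of index \(4\). These have rank \(2n/d < n\) and are compatible with your upper bound \(D_{\mathbf{G}} \subseteq S_v\).

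Excluding them needs an arithmetic input you do not state. For instance: \(\mathbf{G} = \operatorname{Spin}(A,\sigma)\) with \([A]\) of exponent at most \(2\), and over a number field index equals exponent, so \(d \le 2\). Your fallback induction on anisotropic kernels hits the same obstruction. The \(d=4\) index above has the anisotropic-kernel component \(\{\alpha_5,\alpha_6,\alpha_7\}\) with \(\alpha_6\) undistinguished. So ``the component containing \(\alpha_{2n-1}\) cannot have \(\alpha_{2n-2}\) undistinguished'' is false without such input.

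The paper closes this step purely locally, and you already have all the ingredients:

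\begin{enumerate}[label=(\roman*)]
\item Suppose some \(\alpha_{2i}\) with \(i \le n-1\) were not in \(D_{\mathbf{G}}\). Then Proposition~\ref{prop:distinguished-orbits} gives a place \(w\) with \(\alpha_{2i} \notin S_w\).
\item The only real and \(\mathfrak{p}\)-adic Tits diagrams of inner type \({}^1D_{2n}\) are \(\{\alpha_1, \dots, \alpha_{2j}\}\) for \(0 \le j \le n-1\), all of \(\Delta\), and the two asymmetric ones.
\item Hence \(S_w = \{\alpha_1, \dots, \alpha_{2j}\}\) with \(j < i\), which contains no end node. This contradicts \(\alpha_{2n} \in D_{\mathbf{G}} \subseteq S_w\).
\end{enumerate}

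Either this local argument or the period--index remark completes your proof.
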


\begin{proof}
  Since \(n \ge 3\), we have that \(\Sym \Delta \cong \Z/2\).  It is thus clear from Proposition~\ref{prop:distinguished-orbits} that the two Tits diagrams in~\eqref{eq:asymmetric-tits-diagrams} occur for \(\mathbf{H}\) at two places of \(l\).  In contrast, only one of these two diagrams may arise for \(\mathbf{G}\) at any place of \(k\) and the corresponding distinguished end node \(\alpha\) is distinguished throughout for \(\mathbf{G}\).  Thus \(\operatorname{rank}_k \mathbf{G} \le n\) and \(\operatorname{rank}_l \mathbf{H} \le n-1\).  But in fact, these inequalities can never be strict because then one of the distinguished nodes on the long arm of the diagrams in~\eqref{eq:asymmetric-tits-diagrams} would have to be undistinguished at another place for both groups.  But the classification of Tits diagrams reveals that then also all nodes on the right of that node would be undistinguished at that place, contradicting that \(\alpha\) is distinguished everywhere.
\end{proof}

Note that Proposition~\ref{prop:differing-ranks} says in particular that for every \emph{positive} integer \(n\), the property of having global rank \(n\) is in general not determined by the local isomorphism types of a \(k\)-group.  In fact, neither is the property of being \(k\)-anisotropic in view of the following phenomenon occurring in type \(D_4\).

\begin{proposition} \label{prop:triality}
  Let \(k\) be a number field that is not totally imaginary.  Then there exist \(k\)-groups \(\mathbf{G}\) and \(\mathbf{H}\) of type \({}^2 D_4\) with \(\mathbf{G} \cong_{k_v} \mathbf{H}\) for all places \(v\) of \(k\) but
  \[ \operatorname{rank}_k \mathbf{G} = 1, \qquad \operatorname{rank}_k \mathbf{H} = 0. \]
\end{proposition}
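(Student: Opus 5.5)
The plan is to imitate the proof of Proposition~\ref{prop:differing-ranks}. The triality symmetries will play the role that the single nontrivial symmetry of \(D_{2n}\) played there. Concretely, I fix a quadratic extension \(K/k\) and take the quasi-split \(k\)-form \(\mathbf{G_0}\) of type \({}^2 D_4\) split by \(K\). Its \(\ast\)-action swaps the two end nodes \(\alpha_3, \alpha_4\) and fixes \(\alpha_1\) and the central node \(\alpha_2\). Both \(\mathbf{G}\) and \(\mathbf{H}\) will be obtained as twists \({}_\alpha \mathbf{G_0}\) and \({}_\beta \mathbf{G_0}\) by classes \(\alpha, \beta \in H^1(k, \operatorname{Ad} \mathbf{G_0})\). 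These classes will agree at all places except finitely many places \(v_1, \ldots, v_4\) that split in \(K\). I choose \(K\) so that it is nontrivial at some real place. This is where I expect to use that \(k\) is not totally imaginary: it is needed to have a genuine \({}^2 D_4\)-behaviour at an archimedean place, and later to get the desired real Lie group and lattices.

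At a split place \(v_i\), the group \(\mathbf{G_0}\) becomes \(k_{v_i}\)-split of type \({}^1 D_4\). By the \(\mathfrak{p}\)-adic classification and Proposition~\ref{prop:swapping-tits}, there are three non-split inner classes whose Tits diagrams have exactly one distinguished end node, namely \(\alpha_1\), \(\alpha_3\), or \(\alpha_4\). Under \(\delta^1\colon H^1(k_{v_i}, \operatorname{Ad}\mathbf{G_0}) \to H^2(k_{v_i}, Z(\mathbf{G_0})) \cong \Z/2\times\Z/2\), these three classes map to the three nonzero elements. They become equal in \(H^1(k_{v_i}, \operatorname{Aut}\mathbf{G_0})\), since triality permutes them. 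I prescribe the local data as follows:
\begin{itemize}
\item \(\alpha\): the \(\alpha_1\)-class at \(v_1, \ldots, v_4\), and trivial (quasi-split) elsewhere;
\item \(\beta\): the \(\alpha_3\)-class at \(v_1, v_2\), the \(\alpha_4\)-class at \(v_3, v_4\), and trivial elsewhere.
\end{itemize}
In both cases the local invariants sum to zero. The key intermediate claim is that this vanishing is also the global realization criterion for the non-constant center \(Z(\mathbf{G_0}) \cong R_{K/k}(\mu_2)\) twisted appropriately. For this I would invoke the Poitou--Tate criterion of \cite{Kammeyer:galois-cohomology}*{Theorem~IV.7.2} together with the description of the local maps for type \({}^2 D_4\), as in \cite{Baumann-Kammeyer:congruence-rigidity}.

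Given the two global classes, local isomorphism is checked as follows. At \(v_1, \ldots, v_4\), the groups \(\mathbf{G}\) and \(\mathbf{H}\) are isomorphic by triality, since their classes agree in \(H^1(k_{v_i}, \operatorname{Aut}\mathbf{G_0})\). At all other places \(\mathbf{G}\) and \(\mathbf{H}\) are both quasi-split twists of the same \(\mathbf{G_0}\), hence isomorphic. For the ranks I apply Proposition~\ref{prop:distinguished-orbits} to the global orbits \(\{\alpha_1\}\), \(\{\alpha_2\}\), and \(\{\alpha_3,\alpha_4\}\):
\begin{itemize}
\item In \(\mathbf{G}\), the orbit \(\{\alpha_1\}\) is distinguished at every place. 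The central node \(\alpha_2\) and the orbit \(\{\alpha_3,\alpha_4\}\) fail at \(v_1\). Hence \(\operatorname{rank}_k \mathbf{G} = 1\).
\item In \(\mathbf{H}\), the orbit \(\{\alpha_1\}\) fails at \(v_1\), and \(\{\alpha_2\}\) fails at \(v_1\). The orbit \(\{\alpha_3,\alpha_4\}\) fails at \(v_1\), because \(\alpha_4\) is undistinguished there. Hence \(\operatorname{rank}_k \mathbf{H} = 0\).
\end{itemize}

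The main obstacle is the global realization step. Unlike in Proposition~\ref{prop:differing-ranks}, the center is not constant, so the Poitou--Tate pairing involves \(H^0(k_v, \widehat{Z})\) for the dual of a twisted group. Consequently, the condition "local invariants sum to zero" must be reformulated correctly. I would first check that at places split in \(K\), the local contributions land in a common target where the sum condition is the naive one. If this fails, I would add an auxiliary inert or real place carrying a compensating class that is invariant under the \(\ast\)-commuting symmetry, so that it does not spoil the local isomorphism between \(\mathbf{G}\) and \(\mathbf{H}\).
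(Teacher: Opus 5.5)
There is a genuine gap in your rank computation. Over a $\mathfrak{p}$-adic field, the three non-trivial inner forms of split $D_4$ do not have just one distinguished node. Every $8$-dimensional quadratic form there is isotropic of Witt index at least $2$, so the class coming from a quadratic form of trivial discriminant has rank $2$ with diagram $\{\alpha_1,\alpha_2\}$. The other two classes are its triality conjugates, with diagrams $\{\alpha_2,\alpha_3\}$ and $\{\alpha_2,\alpha_4\}$; these are exactly the diagrams drawn in the paper. In fact, the central node is distinguished in every $\mathfrak{p}$-adic Tits index of type $D_4$.

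Hence your claim that $\alpha_2$ fails at $v_1$ is false. In both of your groups the orbit $\{\alpha_2\}$ is distinguished at every place, because elsewhere you prescribe the quasi-split class. By Proposition~\ref{prop:distinguished-orbits} it is then globally distinguished. Your construction therefore produces $\operatorname{rank}_k\mathbf{G}=2$ and $\operatorname{rank}_k\mathbf{H}=1$, not $1$ and $0$. More generally, no choice of local data at finite places can make a group of type $D_4$ anisotropic: $\alpha_2$ has to be killed at an archimedean place.

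The missing idea is to use the real place for exactly this, which is the actual role of the hypothesis that $k$ is not totally imaginary. Take $K/k$ quadratic with $K\otimes_k k_v\cong\C$ at a real place $v$. For both classes, prescribe at $v$ the non-trivial class giving the rank one real form $\operatorname{Spin}(7,1)$, whose Tits diagram has only $\alpha_1$ distinguished. This single place kills $\{\alpha_2\}$ and $\{\alpha_3,\alpha_4\}$ in both groups at once. It does not disturb local isomorphism, since the two classes agree at $v$.

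At split finite places you then put the $\{\alpha_1,\alpha_2\}$-class for one group and a triality conjugate, say the $\{\alpha_2,\alpha_3\}$-class, for the other. Their only job is to remove $\alpha_1$ from $\mathbf{H}$ while keeping it in $\mathbf{G}$, which gives ranks $1$ and $0$. The paper does this with two split places $v_1,v_2$, and obtains the global classes from the same Poitou--Tate criterion you cite. The realization step you flagged as the main obstacle is therefore not where your argument breaks.
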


\begin{proof}
  Let \(\mathbf{G_0}\) be the unique \(k\)-split \(k\)-group of type \(D_4\).  By assumption, \(k\) has a real place \(v\).  Let \(l / k\) be any quadratic extension such that \(l \otimes_k k_v \cong \C\).  Let \(a \colon \operatorname{Gal}(k) \rightarrow \operatorname{Sym} \Delta \cong S_3\) be any homomorphism with kernel \(\operatorname{Gal}(l)\) and let \(\mathbf{G_1}\) be the unique quasi-split \(k\)-group of type \(D_4\) whose class in \(H^1(k, \operatorname{Aut} \mathbf{G_0})\) maps to \([a] \in H^1(k, \operatorname{Sym} \Delta)\).  We will construct \(\mathbf{G}\) and \(\mathbf{H}\) as inner \(k\)-forms of \(\mathbf{G_1}\) as follows.

  Pick two finite places \(v_1\) and \(v_2\) of \(k\) that split in \(l\).   Again by \cite{Kammeyer:galois-cohomology}*{Theorem~IV.7.2, p.\,36} and \cite{Baumann-Kammeyer:congruence-rigidity}*{Table~2, p.\,17}, there exists a class \(\alpha \in H^1(k, \operatorname{Ad} \mathbf{G_1})\) that localizes to the unique non-trivial class at \(v\), with Tits diagram
\begin{center}
\begin{tikzpicture}[line width=0.6pt]
  \coordinate (a1) at (0,0);
  \coordinate (a2) at (1.2,0);

  \coordinate (b3) at (1.55,0.22);
  \coordinate (b4) at (1.55,-0.22);
  \coordinate (a3) at (2.35,0.22);
  \coordinate (a4) at (2.35,-0.22);

  \draw (a1) -- (a2);
  \draw (a2) .. controls (1.28,0.14) and (1.40,0.22) .. (b3) -- (a3);
  \draw (a2) .. controls (1.28,-0.14) and (1.40,-0.22) .. (b4) -- (a4);

  \fill (a1) circle[radius=1.6pt];
  \fill (a2) circle[radius=1.6pt];
  \fill (a3) circle[radius=1.6pt];
  \fill (a4) circle[radius=1.6pt];

  \draw (a1) circle[radius=5pt];
\end{tikzpicture} ,
\end{center}
to the unique class with Tits diagram
\begin{center}
\begin{tikzpicture}[line width=0.6pt, baseline=-0.5ex]
  \coordinate (a1) at (0,0);
  \coordinate (a2) at (1.1,0);
  \coordinate (a3) at (2.2,0.65);
  \coordinate (a4) at (2.2,-0.65);

  \draw (a1) -- (a2);
  \draw (a2) -- (a3);
  \draw (a2) -- (a4);

  \foreach \v in {a1,a2,a3,a4}
    \fill (\v) circle[radius=1.6pt];

  \draw (a1) circle[radius=5pt];
  \draw (a2) circle[radius=5pt];
\end{tikzpicture}
\end{center}
based on the Tits diagram of \(\mathbf{G_1}\) at both \(v_1\) and \(v_2\), and to the trivial class at all remaining places of \(k\).  There exists another class \(\beta \in H^1(k, \operatorname{Ad} \mathbf{G_1})\) which localizes to the same class as \(\alpha\) at \(v\) but localizes to the unique class with diagram
\begin{center}
\begin{tikzpicture}[line width=0.6pt, baseline=-0.5ex]
  \coordinate (a1) at (0,0);
  \coordinate (a2) at (1.1,0);
  \coordinate (a3) at (2.2,0.65);
  \coordinate (a4) at (2.2,-0.65);

  \draw (a1) -- (a2);
  \draw (a2) -- (a3);
  \draw (a2) -- (a4);

  \foreach \v in {a1,a2,a3,a4}
    \fill (\v) circle[radius=1.6pt];

  \draw (a2) circle[radius=5pt];
  \draw (a3) circle[radius=5pt];
\end{tikzpicture}
\end{center}
at \(v_1\) and \(v_2\), and to the trivial class at all other places of \(k\).  Let \(\mathbf{G}\) and \(\mathbf{H}\) be the \(k\)-groups corresponding to the images of \(\alpha\) and \(\beta\) in \(H^1(k, \operatorname{Aut} \mathbf{G_1})\), respectively.  By Proposition~\ref{prop:distinguished-orbits}, we have \(\operatorname{rank}_k \mathbf{G} = 1\) whereas \(\operatorname{rank}_k \mathbf{H} = 0\).  Moreover, we have \(\mathbf{G} \cong_{k_v} \mathbf{H}\) for all places \(v\) by construction.  Indeed, at \(v_1\) and \(v_2\), the quasi-split group \(\mathbf{G_1}\) actually splits and the local classes of \(\alpha\) and \(\beta\) lie in the same orbit of \((\Sym \Delta)(k_{v_i}) \cong S_3\).
\end{proof}

\section{Uniform and non-uniform lattices}

The core part for the proofs of our results was given in the previous section, using the machinery of the classification and Galois cohomology of algebraic groups.  To reduce the proofs of our theorems on lattices in semisimple Lie groups to these considerations, it essentially remains to apply Margulis arithmeticity and adelic superrigidity.

\begin{proof}[Proof of Theorem~\ref{thm:same-q-rank}.]
  Replacing \(\Gamma\) and \(\Lambda\) with their images in \(G/Z(G)\), we may assume \(G\) is center-free, hence linear.  By Margulis arithmeticity~\cite{Margulis:discrete-subgroups}*{Theorem~IX.(1.11), p.\,298} and its rank one extensions by Gromov--Schoen~\cite{Gromov-Schoen:harmonic}, and Corlette~\cite{Corlette:superrigidity}, there exist number fields~\(k\) and~\(l\), as well as simply-connected absolutely almost simple linear algebraic groups~\(\mathbf{G}\) and~\(\mathbf{H}\) defined over~\(k\) and~\(l\), respectively, and continuous homomorphisms 
  \begin{equation} \label{eq:alg-lie} \varphi \colon \prod_{v \in S} \mathbf{G}(k_v) \longrightarrow G, \qquad \psi \colon \prod_{w \in T} \mathbf{H}(l_w) \longrightarrow G \end{equation}
  with finite kernel and cokernel, where \(S\) and \(T\) are the subsets of infinite places in \(k\) and \(l\) at which \(\mathbf{G}\) and \(\mathbf{H}\) are isotropic, such that \(\varphi(\mathbf{G}(\mathcal{O}_k))\) is commensurable with \(\Gamma\) and \(\psi(\mathbf{H}(\mathcal{O}_l))\) is commensurable with \(\Lambda\).  Here \(\mathcal{O}_k\) and \(\mathcal{O}_l\) denote the rings of integers in \(k\) and \(l\), and \(\mathbf{G}\) and \(\mathbf{H}\) are embedded into some \(\mathbf{GL}_n\) so that the integral points are defined.

  Since \(G\) is neither isogenous to \(\operatorname{SO}^0(n,1)\), nor to \(\operatorname{SU}(n,1)\), we have that both \(\mathbf{G}\) and \(\mathbf{H}\) are \emph{algebraically superrigid} in the sense of \cite{Kammeyer-Kionke:adelic-superrigidity}*{Definition~3.1 and below}.  Since  \(\widehat{\Gamma}\) and \(\widehat{\Lambda}\) are commensurable, \emph{adelic superrigidity}~\cite{Kammeyer-Kionke:adelic-superrigidity}*{Theorem~3.2} applies and we obtain a bijection \(v \mapsto v'\) of the finite places of \(l\) and \(k\), isomorphisms \(\eta_v \colon l_v \rightarrow k_{v'}\), and isomorphisms \(\mathbf{G} \cong_{\eta_v} \mathbf{H}\).  In particular, the fields \(k\) and \(l\) are arithmetically equivalent, hence have the same signature~\cite{Klingen:similarities}*{Theorem~III.1.4.h, p.\,79}.  Since \(\varphi\) and \(\psi\) have finite kernel and cokernel, and since \(\mathbf{G}\) and \(\mathbf{H}\) are simply-connected, we can extend the bijection \(v \mapsto v'\) and the corresponding isomorphisms of local fields and groups to the infinite places of \(k\) and \(l\) where \(\mathbf{G}\) and \(\mathbf{H}\) are isotropic.  Finally, since \(k\) and \(l\) have the same signature, they have the same number of real places where \(\mathbf{G}\) and \(\mathbf{H}\) are anisotropic.  Since each simple Cartan--Killing type has a unique simply-connected anisotropic real form, it follows that we can extend the bijection and corresponding isomorphism to all places of \(l\) and \(k\).  As \(G\) has absolute type different from \(D_{2n}\), so do \(\mathbf{G}\) and \(\mathbf{H}\).  Thus Corollary~\ref{cor:rank} completes the proof.
\end{proof}  

\begin{proof}[Proof of Theorem~\ref{thm:different-q-rank}.]
  The first part of the theorem is immediate from the special case \(k = \Q\) of Proposition~\ref{prop:differing-ranks}.  In fact, the \(\Q\)-groups \(\mathbf{G}\) and \(\mathbf{H}\) defined in the proof of Proposition~\ref{prop:differing-ranks} are isomorphic to \(G = \operatorname{Spin}(2n,2n)\) at the real place of \(\Q\).  So any two arithmetic subgroups \(\Gamma \le \mathbf{G}\) and \(\Lambda \le \mathbf{H}\) satisfy all claims of the theorem because \(\mathbf{G}\) and \(\mathbf{H}\) have higher real rank and are \(\Q\)-isotropic by Proposition~\ref{prop:distinguished-orbits}, hence have CSP by the work of Prasad--Raghunathan~\cite{Platonov-Rapinchuk:algebraic-groups}*{Theorem~9.17, p.\,558 and p.\,568}.  Together with strong approximation, this shows that \(\widehat{\Gamma}\) and \(\widehat{\Lambda}\) are commensurable with open subgroups of \(\mathbf{G}(\mathbb{A}^f_k)\) and \(\mathbf{H}(\mathbb{A}^f_k)\), respectively, where \(\mathbb{A}^f_k\) denotes the \(k\)-algebra of finite adeles of~\(k\).  The latter two topological groups are isomorphic because \(\mathbf{G}\) and \(\mathbf{H}\) localize to the same isomorphism types at finite places.

  For the second part of the theorem, suppose \(\Gamma, \Lambda \le G\) are irreducible lattices in a type \(D_{2n}\)-group as stated with commensurable profinite completions and \(\operatorname{rank}_\Q(\Gamma) > \operatorname{rank}_\Q(\Lambda)\).  Arguing as in the previous proof, there are a \(k\)-group \(\mathbf{G}\) and an \(l\)-group \(\mathbf{H}\) whose arithmetic subgroups are commensurable with \(\Gamma\) and \(\Lambda\), respectively, such that \(\mathbf{G}\) and \(\mathbf{H}\) satisfy the assumptions of Proposition~\ref{prop:only-n-and-n-1}.  We conclude \(\operatorname{rank}_\Q (\Gamma) = n\) and \(\operatorname{rank}_\Q (\Lambda) = n-1\).
\end{proof}

\begin{proof}[Proof of Theorem~\ref{thm:triality}.]
The theorem follows from Proposition~\ref{prop:triality} by choosing any number field \(k\) that is neither \(\Q\) nor totally imaginary.  Indeed, in that case \(k\) has at least two infinite places and thus the groups \(\mathbf{G}\) and \(\mathbf{H}\) constructed in the proof of Proposition~\ref{prop:triality} have higher rank.  Again, CSP is known for \(\mathbf{G}\) as it is \(k\)-isotropic, and it holds for \(\mathbf{H}\) by assumption.  So arithmetic subgroups of \(\mathbf{G}\) and \(\mathbf{H}\) define lattices as required.
\end{proof}

The statement below Theorem~\ref{thm:uniform-non-uniform} is also clear because we can choose a real quadratic field for \(k\) and repeat the construction with a quadratic extension \(l/k\) in which both real places of \(k\) become complex.  We can then prescribe that the rank one form of type \({}^2 D_4\) occurs at both real places of \(k\) for \(\mathbf{G}\) and \(\mathbf{H}\).

\begin{proof}[Proof of Theorem~\ref{thm:different-lie-groups}.]
  Let \(\Gamma \le G\) be a uniform lattice.  Since \(G\) is connected and has finite center, we may replace \(\Gamma\) with a finite index subgroup, if need be, to assume that \(\Gamma\) is a lattice in \(\mathbf{G}(\mathbb{K})\) where \(\mathbb{K} = \R \text{ or } \C\) and where \(\mathbf{G}\) is a connected absolutely almost simple \(\mathbb{K}\)-group.  Since \(G\) is neither compact nor of type \(F_{4(-20)}\), we have \(\operatorname{rank}_{\mathbb{K}} \mathbf{G} \ge 2\).  By Margulis arithmeticity~\cite{Margulis:discrete-subgroups}*{Theorem IX.1.11, p.\,298 and (\(\ast \ast\)) on p.\,293}, there exists a densely embedded number field \(k \subset \mathbb{K}\) and a simply connected absolutely almost simple \(k\)-group \(\mathbf{H}\) with the following properties.
  \begin{enumerate}[label=(\roman*)]
  \item \label{item:lie-algebras} The Lie groups \(\mathbf{H}(\mathbb{K})\) and \(\mathbf{G}(\mathbb{K})\) have \(\mathbb{K}\)-isomorphic Lie algebras.  
  \item The infinite places of \(k\) other than the given one are real.
  \item The group \(\mathbf{H}\) is anisotropic at these other real places.
  \item \label{item:lattices-commensurable} The lattice \(\Gamma\) is abstractly commensurable with every arithmetic subgroup of \(\mathbf{H}\).
  \end{enumerate}
  By~\ref{item:lie-algebras}, the group \(\mathbf{H}\) has type \(E_8\), \(F_4\), or \(G_2\), so the Dynkin diagram has no symmetries and \(\mathbf{H}\) is both simply connected and has trivial center.  Thus \(\mathbf{H} = \operatorname{Aut} \mathbf{H}\).  A theorem of Kneser~\cite{Kneser:galois} shows that \(H^1(K, \mathbf{H})\) is trivial for every \(\mathfrak{p}\)-adic field \(K\).   So \(\mathbf{H}\) as well as every \(k\)-form of \(\mathbf{H}\) splits at all finite places of \(k\).  Now let \(\mathbf{H_0}\) be the global \(k\)-split form of \(\mathbf{H}\) and set \(H = \prod_{v \mid \infty} \mathbf{H_0}(k_v)\).  Then by the Borel--Harish-Chandra theorem, we can choose any arithmetic subgroup \(\Lambda \le \mathbf{H_0}(k)\) to obtain an irreducible lattice in \(H\) and this lattice is non-uniform because \(\operatorname{rank}_k \mathbf{H_0} = 8, 4, \text{or } 2\) depending on the type of \(G\).  Note that not only the \(k\)-split group \(\mathbf{H_0}\) but also the \(k\)-anisotropic group \(\mathbf{H}\) is known to have a finite congruence kernel~\cite{Platonov-Rapinchuk:algebraic-groups}*{Theorems~9.23 and~9.24, p.\,568-569} since \(\operatorname{rank}_{\mathbb{K}} \mathbf{H} \ge 2\) by~\ref{item:lie-algebras}.  Together with~\ref{item:lattices-commensurable}, CSP gives that the profinite completion \(\widehat{\Gamma}\) is commensurable with an open compact subgroup of \(\mathbf{H}(\mathbb{A}^f_k)\) while \(\widehat{\Lambda}\) is commensurable with an open compact subgroup of \(\mathbf{H_0}(\mathbb{A}^f_k)\) where \(\mathbb{A}^f_k\) denotes the ring of finite adeles of \(k\).  But since both \(\mathbf{H}\) and \(\mathbf{H_0}\) split at all finite places of \(k\), we have \(\mathbf{H}(\mathbb{A}^f_k) \cong \mathbf{H_0}(\mathbb{A}^f_k)\), hence \(\widehat{\Gamma}\) is commensurable with \(\widehat{\Lambda}\).
\end{proof}

  Finally, to prove the statement below Theorem~\ref{thm:different-q-rank}, we argue similarly as in the proof of Proposition~\ref{prop:only-n-and-n-1}.  Indeed, if two groups of type \(D_{2n}\) have the same local isomorphism types but differing global ranks, they must both be inner forms and the Tits diagrams arising at real places have at least one of the two end nodes of the short two legs of the Dynkin diagram distinguished.  This leaves \(\operatorname{SO}^0(2n,2n)\) and \(\operatorname{SO}^*(4n)\) as the only groups that may occur at real places.  If only these groups occur as simple real factors of \(G\), we can argue similarly to the proof of Proposition~\ref{prop:differing-ranks} to construct two \(k\)-groups with differing \(k\)-rank whose arithmetic subgroups define profinitely commensurable lattices in \(G\).

\end{document}